\title[4-manifolds with shadow complexity zero and collapsing]
{Shadows of 4-manifolds with complexity zero and polyhedral collapsing}
\author[Hironobu Naoe]{Hironobu Naoe}
\address{Tohoku University, Sendai, 980-8578, Japan}
\email{sb3m22@math.tohoku.ac.jp}
\keywords{4-manifolds, shadows, polyhedra, complexity. }
\subjclass[2010]{Primary 57N13, 57M20; Secondary 57R65.}
\theoremstyle{plain}
\newtheorem{theorem}{Theorem}[section]
\newtheorem{lemma}[theorem]{Lemma}
\newtheorem{corollary}[theorem]{Corollary}
\newtheorem{proposition}[theorem]{Proposition}
\newtheorem{assertion}{Assertion}
\newtheorem{maintheorem}{Theorem}
\newtheorem{maincorollary}[maintheorem]{Corollary}
\theoremstyle{definition}
\newtheorem{definition}[theorem]{Definition}
\newtheorem{remark}[theorem]{Remark}
\newcommand{\Z}{\mathbb{Z}}
\newcommand{\R}{\mathbb{R}}
\newcommand{\Int}{\mathrm{Int}}
\newcommand{\Nbd}{\mathrm{Nbd}}
\newcommand{\gl}{\mathrm{gl}}
\newcommand{\Mobius}{M\"{o}bius }
\definecolor{akairo}{rgb}{.8,0,0}
\long\def\@makecaption#1#2{
  \small
  \vskip\abovecaptionskip
  \sbox\@tempboxa{#1. #2}
  \ifdim \wd\@tempboxa >\hsize
    #1. #2\par
  \else
    \global \@minipagefalse
    \hb@xt@\hsize{\hfil\box\@tempboxa\hfil}
  \fi
  \vskip\belowcaptionskip}
\begin{document}
\begin{abstract}
Our purpose is to classify acyclic 4-manifolds having shadow complexity zero. 
In this paper, we focus on simple polyhedra and discuss this problem combinatorially. 
We consider a shadowed polyhedron $X$ and a simple polyhedron $X_0$ 
that is obtained by collapsing from $X$. 
Then we prove that 
there exists a canonical way to equip internal regions of $X_0$ with gleams 
so that two 4-manifolds reconstructed from $X_0$ and $X$ are diffeomorphic. 
We also show that any acyclic simple polyhedron whose singular set is a union of circles 
can collapse onto a disk. 
As a consequence of these results, 
we prove that any acyclic 4-manifold having shadow complexity zero with boundary is 
diffeomorphic to a $4$-ball. 
\end{abstract}

\maketitle
%==================================================================================
\section*{Introduction}
%==================================================================================
In 1990s Turaev introduced the notion of {\itshape shadows} of $3$- and $4$-manifolds 
with the intention of studying quantum invariants of knots and $3$-manifolds. 
A simple polyhedron $X$ is called a {\itshape shadow} of a $4$-manifold $M$ 
% if $X$ is embedded properly and locally-flat in $M$ which collapses onto $X$. 
if $M$ collapses onto $X$ and $X$ is embedded properly and locally-flat in $M$. 
As it is well-known, $M$ can be reconstructed from $X$ and 
an appropriate decoration of its regions with some half-integers called {\itshape gleams}. 
It is called Turaev's reconstruction. 
Shadows provide many geometric properties of $3$- and $4$-manifolds. 
We refer the reader to Costantino \cite{costantino2006stein, costantino2008branched} 
for studies of Stein structures, Spin${}^c$ structures and complex structures of 4-manifolds. 
In \cite{costantino20083} 
Costantino and Thurston established the relation between shadows and 
Stein factorizations of stable maps from $3$-manifolds into $\R ^2$. 
Consequently they observed the relation between hyperbolicity of $3$-manifolds 
and their {\itshape shadow complexities}, 
which was strengthened by Ishikawa and Koda later \cite{ishikawa2014stable}. 
Here the {\itshape shadow complexity} of $M$ is defined as 
the minimum number of true vertices of a shadow of $M$. 
This notion is introduced by Costantino in \cite{costantino2006complexity}, 
in which he studied closed $4$-manifolds with shadow complexities $0$ and $1$ 
in a special case. %(i.e. every region is a 2-disk). 
In \cite{martelli2010four} Martelli completely classified the closed 
$4$-manifolds with shadow complexity $0$ in the general case. 

In this paper we study acyclic $4$-manifolds with shadow complexity zero
by observing the structure of simple polyhedra. 

% Here we consider an assignment of gleams as follows. 
To state the first theorem, we introduce a canonical way 
to equip internal regions of a subpolyhedron of a shadowed polyhedron with gleams. 
Let $(X,\gl )$ be a shadowed polyhedron, and 
$X_0$ a simple polyhedron with $X_0\subset X$. 
Let $R$ be an internal region of $X_0$. 
We observe that $Sing(X_0)\subset Sing(X)$ and that $R$ might be split by $Sing(X)$ 
into some internal regions $R_{1}, \ldots R_{n}$ of $X$ ($n\geq 1$).
Then we assign a gleam to $R$ by 
\begin{align}
\gl (R) = \sum_{i=1}^{n} \gl(R_{i}) .
\label{sum of gleams}
\end{align}

Suppose that there exist a triangulation $(K,K_0)$ of the pair $(X,X_0)$ and 
a sequence of elementary simplicial collapses from $K$ onto $K_0$.
Then we say that $X$ {\itshape collapses} $X_0$. 
Moreover, this removal is called a {\itshape polyhedral collapse} (or simply {\itshape collapse}) 
and denoted by $X\searrow X_0$. 

We denote by $M_X$ the $4$-manifold obtained from 
a shadowed polyhedron $(X,\gl)$ by Turaev's reconstruction. 
Now we state our first theorem. 
% We say that simple polyhedron $X$ {\itshape collapses} 
% to a subpolyhedron $X_0$ of $X$ 
% if there exists a triangulation $(K,K_0)$ of the pair $(X,X_0)$ and 
% a sequence of elementary simplicial collapses from $K$ onto $K_0$. 
% This removal is called a {\itshape polyhedral collapse} (or simply {\itshape collapse}) 
% and denoted by $X\searrow X_0$. 
% We further denote by $M_X$ the $4$-manifold obtained from 
% a shadowed polyhedron $(X,\gl)$ by Turaev's reconstruction.

\begin{maintheorem}
\label{thm: s-collapse}
Let $(X,\gl )$ be a shadowed polyhedron, and $X_0$ a simple polyhedron collapsed from $X$. 
% Then there exists a canonical way to equip each internal region of $X_0$ 
% with a gleam from $(X,\gl )$ such that $M_{X} \cong M_{X_0}$. 
Assign a gleam to each internal region of $X_0$ by formula (\ref{sum of gleams}). 
Then we have $M_{X} \cong M_{X_0}$. 
\end{maintheorem}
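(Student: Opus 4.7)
The strategy is to recognize $X_0$ as a shadow of the same 4-manifold $M_X$ and to identify its induced gleams with the sum formula~(\ref{sum of gleams}). Since $M_X$ collapses onto $X$ by the definition of Turaev's reconstruction and $X\searrow X_0$ by hypothesis, transitivity of collapse yields $M_X\searrow X_0$. Moreover, the embedding $X_0\hookrightarrow X\hookrightarrow M_X$ inherits properness and local flatness from $X\hookrightarrow M_X$, so $X_0$ is a shadow of $M_X$. By Turaev's reconstruction theorem there is a unique gleam function $\gl'$ on the internal regions of $X_0$ realizing this shadow; it remains to show $\gl'(R)=\sum_{i=1}^{n}\gl(R_i)$ for each internal region $R$ of $X_0$.

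I would then proceed by induction on the length of the collapse. Since a general simplicial collapse passes through non-simple intermediate polyhedra, I would first refactor $X\searrow X_0$ into a sequence of collapses $X = Y_k\searrow\cdots\searrow Y_0 = X_0$ through simple polyhedra, where each step $Y_{j+1}\searrow Y_j$ is an elementary simple-polyhedral move that removes either (a) an open 2-disk region with a free boundary arc, or (b) a free open 1-stratum (singular or boundary) with a free endpoint. For each such move, exactly one of the following holds: the removed cell lies outside every internal region of $X_0$, so the gleams of the $X_0$-regions are unaffected; or the removed cell is a singular arc of $Y_{j+1}$ lying in the interior of an internal region $R$ of $Y_j$, so that two regions $R_i, R_{i+1}$ of $Y_{j+1}$ merge into $R$. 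In the latter case I would use the local $D^2$-bundle model of $X$ inside $M_X$ to show that the half-disk bundles over $R_i$ and $R_{i+1}$ glue smoothly across the collapsed arc into a single half-disk bundle over $R$, and that the Euler-type invariant defining the gleam is additive under this gluing, giving $\gl'(R)=\gl(R_i)+\gl(R_{i+1})$. Iterating over all steps recovers the sum formula.

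The main obstacle will be the local behaviour at true vertices of $X$ that are not vertices of $X_0$. Near such a vertex four sheets of $X$ meet in a prescribed local model, and the gluing of the four normal $D^2$-bundles is non-trivial; one must verify that the half-integer boundary contributions to the gleam coming from neighbourhoods of these vertices combine correctly across the collapse. I would handle this via an auxiliary local lemma computing the effect on gleams of each simple-polyhedral move that eliminates a true vertex, and then assemble the full gleam identity by accounting separately for the contributions of vertex-eliminating and arc-eliminating moves.
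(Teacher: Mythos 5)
Your overall architecture is sound in outline, but it contains two genuine gaps, and at both points the paper's actual proof takes a shortcut that your plan misses. First, properness is \emph{not} inherited by the inclusion $X_0\hookrightarrow M_X$: the definition of a shadow requires $X_0\cap\partial M_X=\partial X_0$, whereas after a collapse $\partial X_0$ typically lies in $\Int M_X$, so $X_0$ is simply not a shadow of $M_X$ as you claim. The paper repairs exactly this by shrinking: it takes the second barycentric subdivision $K$ of a triangulation of $X_0$, deletes the simplices meeting $\partial X_0$ to get $K'$ with $K\searrow K'$, and shows $X_0$ is a shadow of $\Nbd (X_0';M_X)$ where $X_0'=|K'|$. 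Separately, the hypothesis $X\searrow X_0$ is used once and only once, through the regular neighborhood theorem (the paper's Proposition 2.1, from Rourke--Sanderson): $M_X\cong\Nbd (X;M_X)\cong\Nbd (X_0;M_X)\cong M_{X_0}$. In your version the collapse is instead threaded through the entire gleam computation, which leads to the second and more serious gap.

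Your induction requires refactoring the simplicial collapse $X\searrow X_0$ into elementary moves all of whose intermediate stages are simple polyhedra, and this claim is unjustified and likely false as stated: deleting one sheet along a triple line leaves two sheets meeting along a line, which is not among the five simple local models, so intermediate polyhedra of a collapse between simple polyhedra are in general not simple. Your two-case analysis per move is also incomplete (a single move can merge more than two regions, the singular stratum involved can be a circle rather than an arc, and the two sides of a collapsed arc can belong to the same region), and the ``auxiliary local lemma'' at true vertices that you defer is precisely the hard content. The paper needs none of this: its Lemma 2.3 proves the gleam identity for an \emph{arbitrary} simple subpolyhedron $X_0\subset X$, with no collapse and no induction, by one global intersection count using formula (\ref{sum of intersection}). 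Setting $S=R\cap Sing(X)$, it assembles the interval bundles over the $\partial R_i$ into a single interval bundle $L$ over $S\cup\partial R$, perturbs $S$ once to $S'$ (pushed off the zero section at the true vertices, which disposes of your vertex problem in one sentence), extends to a perturbation $R'$ with $S'\subset R'$, and observes that each non-vertex point of $S$ lies on exactly two of the $\partial R_i$, so that $\frac{1}{2}\sharp (\partial R\cap\partial R')+\sharp (R\cap R')=\sum_{i=1}^{n}\bigl(\frac{1}{2}\sharp (\partial R_i\cap\partial R_i')+\sharp (R_i\cap R_i')\bigr)=\sum_{i=1}^{n}\gl (R_i)$. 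To make your argument work you would first have to prove a factorization theorem for collapses of simple polyhedra that the paper never requires; I recommend replacing the induction with the direct intersection-theoretic computation.
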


In the second theorem, we will study acyclic simple polyhedra and their collapsibility. 
Martelli introduced a way to convert a simple polyhedron 
whose singular set is a disjoint union of circles 
to a graph in \cite{martelli2010four}. 
We will use his notations and introduce some moves on a graph that correspond to collapsings. 
\begin{maintheorem}
\label{thm: no vertex}
Any acyclic simple polyhedron whose singular set 
is a disjoint union of circles collapses onto $D^2$.
\end{maintheorem}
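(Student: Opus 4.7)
The strategy is to induct on the number of triple circles of $X$, encoding $X$ as Martelli's graph $G_X$ and applying the collapsing moves advertised just before the theorem. The base case is $S(X)=\emptyset$: then $X$ is a compact connected surface, and acyclicity forces $X\cong D^2$. For the inductive step, I first locate a disk region via Euler characteristics. Since $S(X)$ is a disjoint union of circles we have $\chi(S(X))=0$, and $X\smallsetminus S(X)$ is an open surface whose connected components are the interiors of the regions, so compactly supported additivity yields
\[
1 \;=\; \chi(X) \;=\; \sum_R \chi(R).
\]
Thus some region $R$ has $\chi(R)\geq 1$, making $R$ either a disk or a closed surface of positive Euler characteristic. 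A closed region is automatically a connected component of $X$, hence excluded by acyclicity, so $R\cong D^2$.

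I then case-split on whether $\partial R$ meets $\partial X$. In the \emph{boundary-meeting} sub-case, an arc of $\partial R\cap\partial X$ serves as a free face; elementary collapsing across it removes $R$ and produces a polyhedron $X'$ that is simple, has singular set a disjoint union of circles, is acyclic because polyhedral collapses preserve homotopy type, and has strictly smaller complexity, so induction applies. In the \emph{internal} sub-case, $\partial R$ lies entirely on triple circles; I invoke one of Martelli's graph moves, which corresponds to collapsing a neighborhood of $R$ together with a local sheet at an incident triple circle, and which again yields a smaller acyclic simple polyhedron with the same hypotheses.

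The \textbf{main obstacle} is the internal sub-case: one must verify that in any such configuration at least one Martelli move is actually applicable, and that the move preserves simplicity, the circular-singular-set condition, and acyclicity. I plan to handle this by combining the Euler-characteristic identity above, which constrains how $\partial R$ sits inside $S(X)$, with a local model analysis at each triple circle, exploiting the fact that only finitely many sheet configurations can occur along such a circle. A short Mayer--Vietoris computation should then confirm that the reduced polyhedron remains acyclic, closing the induction at $D^2$.
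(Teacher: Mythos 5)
The base case and your Euler-characteristic location of a disk region are both fine (the latter echoes the paper's choice of a root disk piece). But your case split is degenerate: since $Sing(X)$ is a disjoint union of circles there are no points of type (v), so $Sing(X)\cap\partial X=\emptyset$, and each boundary circle of the closure of a region lies entirely in $\partial X$ or entirely in one singular circle. Your disk region $R$ has a single boundary circle, so in the ``boundary-meeting'' sub-case $\Cl (R)$ is open and closed in $X$, whence $X=\Cl (R)\cong D^2$ already; the induction makes progress only in the trivial situation. Everything therefore rests on the internal sub-case, and there lies a genuine gap: when $\partial R\subset Sing(X)$, neither $R$ nor the sheets meeting it along the singular circle have a free face, so no elementary collapse can start at $R$ at all. ``Collapsing a neighborhood of $R$ together with a local sheet'' is not a collapse but an excision; deleting an open $2$-cell does not preserve homotopy type (it changes homology), so your Mayer--Vietoris check of acyclicity would fail, and in any case the operation does not fit the definition $X\searrow X'$. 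Collapses of $X$ can only be launched from $\partial X$, which may be far away from the disk region you found.

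This is exactly the difficulty the paper's proof is organized around, and it cannot be repaired by a local model analysis at the incident triple circle, because the applicability of the collapsing moves is governed by global homological constraints. The paper works from the boundary inward: Lemma \ref{lem:scc splits P} shows every decomposition curve splits $X$ into an acyclic piece and a homology-$S^1$ piece; this makes Martelli's graph a tree, excludes $Y_3$, and---combined with Ikeda's theorem that no closed acyclic simple polyhedron without true vertices exists---guarantees a vertex of type (B) in the subgraph $G_2$ cut off by the farthest $Y_{111}$. The collapsing moves (a), (b), (c) all require such a (B) vertex adjacent to the configuration; when the boundary is adjacent only to pants vertices, IH-moves (re-decompositions, not collapses) must be applied to reposition it, and the exclusion of the bad $(12)$-adjacency (Assertion \ref{assrt: (12)-(P)-(B)}) again uses Lemma \ref{lem:no genus} and Lemma \ref{lem:scc splits P} globally. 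None of this boundary-driven bookkeeping is recoverable from your Euler-characteristic count plus finitely many local sheet configurations, so the main step of your induction remains unproved.
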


We have an important consequence 
of Theorem \ref{thm: s-collapse} and Theorem \ref{thm: no vertex} as follows:
\begin{maincorollary}
\label{cor: shadow complexity 0}
Every acyclic $4$-manifold with shadow complexity zero is diffeomorphic to $D^4$. 
\end{maincorollary}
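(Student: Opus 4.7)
The plan is to chain Theorem \ref{thm: s-collapse} and Theorem \ref{thm: no vertex}. Let $M$ be an acyclic $4$-manifold with shadow complexity zero, and fix a shadowed polyhedron $(X,\gl)$ realizing the minimum, so that $X$ has no true vertices and $M_X \cong M$.

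The first step is to reduce to a statement about $X$ alone. Because the singular set of a simple polyhedron consists of triple lines meeting only at true vertices, the absence of true vertices in $X$ forces $Sing(X)$ to be a closed $1$-manifold, hence a disjoint union of circles. Next, since $M$ collapses onto $X$ by definition of a shadow, the inclusion $X\hookrightarrow M$ is a strong deformation retract and hence a homotopy equivalence; so $X$ inherits acyclicity from $M$. Theorem \ref{thm: no vertex} now applies and yields $X \searrow D^2$. Equip $D^2$, viewed as a simple polyhedron with empty singular set, with the gleams prescribed by formula (\ref{sum of gleams}). The unique $2$-dimensional region of $D^2$ meets $\partial D^2$, so there are no internal regions to decorate and the assignment is vacuous. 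Theorem \ref{thm: s-collapse} then delivers $M \cong M_X \cong M_{D^2}$.

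All that remains is to identify $M_{D^2}$. Since the singular set is empty and there are no internal regions to which a gleam contributes, Turaev's reconstruction applied to $D^2$ produces a regular neighborhood of a properly embedded disk with no twisting data to specify; this is the trivial $D^2$-bundle over $D^2$, which is $D^4$. Combining the diffeomorphisms gives $M \cong D^4$.

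The argument is essentially a repackaging of the two main theorems, so there is no deep obstacle; the points requiring verification are the two hypotheses of Theorem \ref{thm: no vertex} (acyclicity of $X$ and the structure of $Sing(X)$ as circles) and the identification $M_{D^2} \cong D^4$. The first is immediate from the homotopy equivalence $M \simeq X$, the second from the combinatorial description of $Sing(X)$ near a true vertex, and the third from the emptiness of the set of internal regions. The genuine content of the corollary is therefore concentrated in Theorem \ref{thm: no vertex}, whose proof must produce an actual collapsing sequence from acyclicity plus the circular singular set.
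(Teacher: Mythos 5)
There is a genuine gap at your very first reduction step: the claim that the absence of true vertices forces $Sing(X)$ to be a closed $1$-manifold, hence a disjoint union of circles, is false for simple polyhedra with boundary. Among the local models is type (v), a boundary point lying on a triple line, so a triple line may be a closed interval (arc) whose endpoints lie on $\partial X$ --- and since an acyclic shadow necessarily has nonempty boundary, this is precisely the situation at hand. Thus, without true vertices, each component of $Sing(X)$ is a circle \emph{or a closed interval}, and in the latter case Theorem \ref{thm: no vertex} does not apply to $X$ at all, since its hypothesis demands that the singular set be a union of circles. Your chain $M \cong M_X \cong M_{D^2} \cong D^4$ therefore proves the corollary only in the circle case.

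The paper handles the missing case as follows: decompose $X$ along all closed-interval components of $Sing(X)$ into simple subpolyhedra $X_1,\ldots,X_n$, each of whose singular sets consists only of circles. By Turaev's reconstruction, $M_X \cong M_{X_1}\natural\cdots\natural M_{X_n}$; each $X_i$ is acyclic (each $M_{X_i}$ is a boundary-connected summand of the acyclic $M_X$), so each $M_{X_i}\cong D^4$ by exactly the argument you gave, and a boundary connected sum of $4$-balls is again $D^4$. Your remaining steps --- acyclicity of $X$ via the deformation retraction, the vacuity of the gleam assignment on $D^2$ because its unique region is a boundary region, and the identification $M_{D^2}\cong D^2\times D^2 \cong D^4$ --- agree with the paper, but the interval components of the singular set must be addressed for the proof to be complete.
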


This paper consists of 4 sections. 
In Section 1 we review the definitions of simple polyhedra and shadows. 
In Section 2 and in Section 3, we give the proofs of Theorem \ref{thm: s-collapse} 
and Theorem \ref{thm: no vertex} respectively. 
In Section 4, we discuss some consequences of our theorems. 

Throughout this paper, we work in smooth category unless otherwise mentioned. 

\subsection*{Acknowledgments. }
The author would like to thank his supervisor, Masaharu Ishikawa, 
for his useful comments and encouragement. 
He would also like to thank the referee for careful reading and helpful suggestions.

%==================================================================================
\section{Simple polyhedra and shadows}
\label{sec:Almost-special polyhedra and shadows}
%==================================================================================
\begin{figure}[t]
	\begin{center}
	\includegraphics[width=100mm]{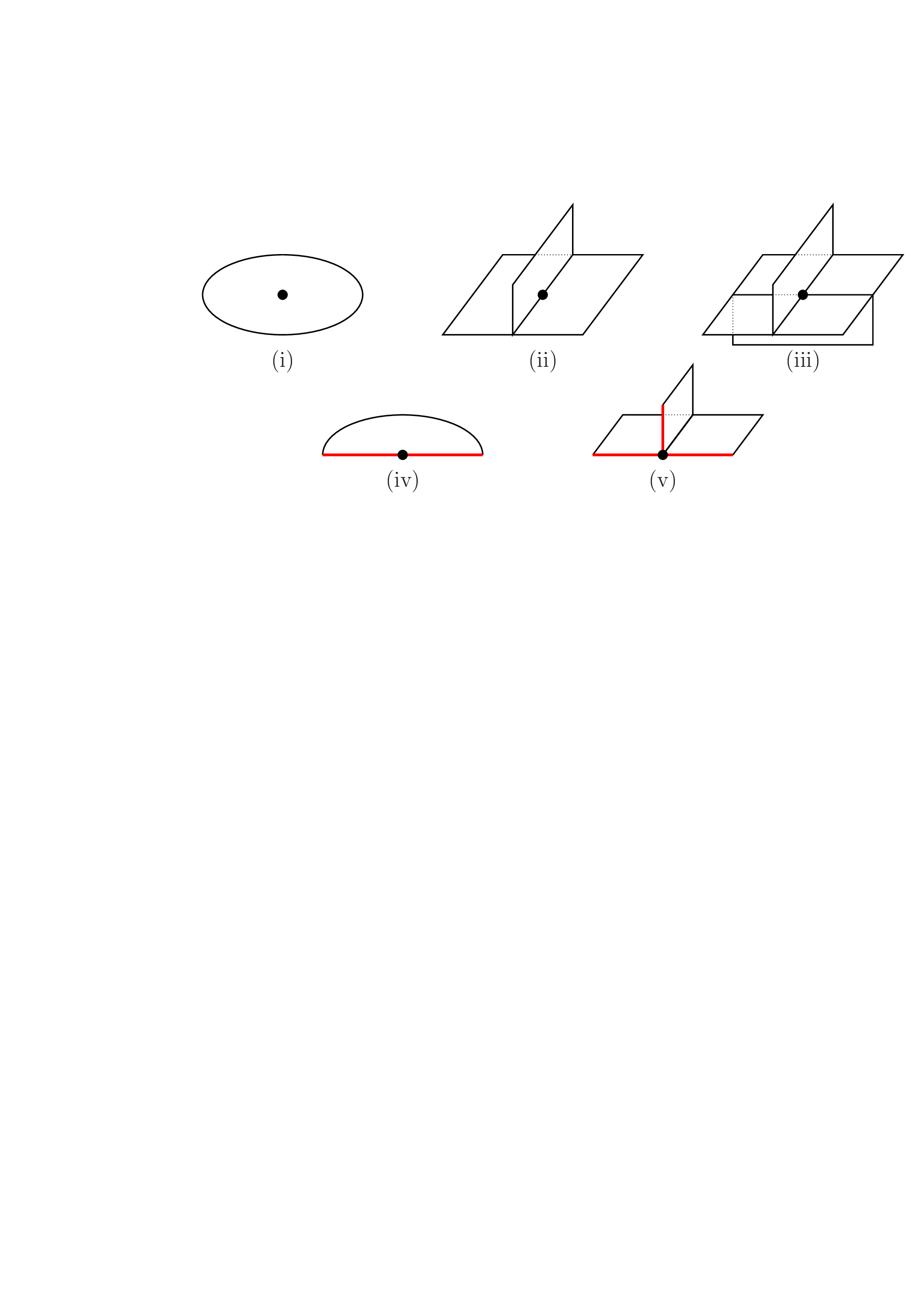}
	\caption{The local models of simple polyhedra.}
	\label{local model}
	\end{center}
\end{figure}
A compact topological space $X$ is called a {\it simple polyhedron} 
if any point $x$ of $X$ has a regular neighborhood $\Nbd (x;X)$ homeomorphic to 
one of the five local models shown in Figure \ref{local model}. 
A {\it true vertex} is a point whose regular neighborhood has a model of type (iii). 
We note that the model of type (iii) is homeomorphic to 
a cone over the complete graph $K_4$ with $4$ vertices. 
The point $(p,0)\in (K_4 \times [0,1]) / (K_4 \times \{0 \})$ is just a true vertex. 
% We denote the set of true vertices by $V(P)$. 
% The {\it singular set} of $X$, denoted by $Sing(X)$, 
% is the set of points whose regular neighborhoods are of type (ii), (iii) or (v). 
A connected component of 
the set of points whose regular neighborhoods are of type (ii) or (v)
is called a {\it triple line}. 
The union of all true vertices and triple lines 
is called the {\it singular set} of $X$ and denoted by $Sing(X)$. 
The boundary $\partial X$ of $X$ is the set of points 
whose regular neighborhoods are of type (iv) or (v). 
Each component of $X\setminus Sing(X)$ is called a {\it region} of $X$. 
If a region $R$ contains points of type (iv) 
then $R$ is called a {\it boundary region}, 
and otherwise it is called an {\it internal region}. 
% A simple polyhedron $X$ is said to be {\itshape simple} 
% if each point of $P$ has a regular neighborhood of type (i)--(iv). 
%----------------------------------------------------------------------------------
\begin{definition}
Let $M$ be a compact oriented $4$-manifold 
and let $T$ be a (possibly empty) trivalent graph in the boundary $\partial M$ of $M$.
A simple polyhedron $X$ in $M$ is called a {\it shadow} of $(M,T)$ 
if the following hold:
\begin{itemize}
\item $M$ collapses onto $X$,
\item $X$ is locally flat in $M$, that is, for each point $x$ of $X$ there exists 
a local chart $(U,\phi)$ of $M$ around $x$ such that 
$\phi (U\cap X)\subset \R^3 \subset \R^4$, and
\item $X\cap \partial M=\partial X=T$. 
\end{itemize}
\end{definition}
%----------------------------------------------------------------------------------
The following theorem by Turaev is 
very important and is called {\it Turaev's reconstruction}. 
%----------------------------------------------------------------------------------
\begin{theorem}
[Turaev \cite{turaev1994quantum}]
Let $X$ be a shadow of a $4$-manifold $M$. 
Then there exists a canonical way to equip each internal region of $X$ with a half-integer. 
Conversely, we can reconstruct $M$ uniquely from $X$ and the half-integers. 
\end{theorem}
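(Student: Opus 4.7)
\smallskip

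\noindent\textbf{Proof sketch.} I would reduce the theorem to a local analysis of the five local models of Figure~\ref{local model} and then glue. Let $U=\Nbd(Sing(X);X)$ be a small regular neighborhood of the singular set inside $X$. Then $U$ thickens to a preferred compact $4$-dimensional piece $\widetilde U \subset M$, determined up to diffeomorphism by the combinatorial type of the graph $Sing(X)$ together with the incidence pattern of the adjacent regions, because the local flatness hypothesis forces the $4$-dimensional thickenings of local models (ii), (iii), (v) to agree with the standard ones in $\R^4$. The complement $\overline{X\setminus U}$ is a disjoint union of compact surfaces $\bar R_U$, one for each region $R$, and my plan is to analyze the normal disk bundle of each $\bar R_U$ in $M\setminus\Int\widetilde U$.

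For the forward direction, I would define $\gl(R)$ on each internal region as a relative Euler number. On each component of $\partial \bar R_U\subset \partial\widetilde U$, the two transverse sheets of $X$ meeting $R$ along the corresponding triple line provide sections of the normal $D^2$-bundle of $\bar R_U$ in $M$, canonically determined up to the $\Z/2$ ambiguity of swapping the two transverse sheets. The gleam $\gl(R)$ is then declared to be the relative Euler number of this disk bundle with respect to the boundary framing assembled from these sections. An obstruction-theoretic count shows that the sheet-swap ambiguity contributes integer twists whose total around a boundary circle is always \emph{even}, so the resulting number is well-defined modulo $\Z$ and lands in $\tfrac12\Z$.

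For the reverse direction, I would invert this construction. First thicken $U$ to $\widetilde U$ using the standard local models; this is canonical. Then for each internal region $R$ with assigned gleam $\gl(R)\in\tfrac12\Z$, glue a $D^2$-bundle over $\bar R_U$ to $\widetilde U$ along the preferred framings on the relevant components of $\partial\widetilde U$, with relative Euler number equal to $\gl(R)$. Boundary regions of $X$ contribute trivial $I$-bundles extending the trivalent graph $T=\partial X$ outward to $\partial M$. At every stage the gluing is determined up to diffeomorphism by the combinatorial data and the gleams, giving a unique $M$, and checking that the forward procedure applied to this $M$ returns the original gleams is a direct unwinding of the definitions.

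The main obstacle is to make the canonical framing along triple lines genuinely canonical. One has to orient the boundary of each region consistently, track how the sheet-swap $\Z/2$ ambiguity propagates through the true vertices of local model (iii) where four triple lines meet, and verify that the resulting framings glue coherently around every boundary component of $\bar R_U$. The even parity of the total framing ambiguity, which is exactly what guarantees $\gl(R)\in\tfrac12\Z$ rather than merely $\R/\Z$-valued, is the delicate combinatorial point; once this is in place the rest of the argument is a routine obstruction-theoretic gluing.
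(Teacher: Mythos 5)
First, note that the paper does not prove this statement at all: it is quoted as Turaev's reconstruction theorem with a citation to \cite{turaev1994quantum}, so the only available comparison is with Turaev's own argument. Your sketch does follow the same architecture as that argument: a canonical $4$-dimensional thickening of $\Nbd(Sing(X);X)$ forced by local flatness and the orientation of $M$, normal disk bundles over the truncated regions $\bar R_U$ glued along boundary framings coming from the transverse sheets at the triple lines, and $\gl(R)$ defined as a relative Euler number --- consistent with the interpretation the paper records in Remark \ref{rmk; gleam} and formula (\ref{sum of intersection}).

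However, your half-integrality step contains a genuine error. You assert that the sheet-swap ambiguity ``contributes integer twists whose total around a boundary circle is always \emph{even},'' and conclude the number is ``well-defined modulo $\Z$'' and lands in $\tfrac12\Z$. Both halves of this are off. First, the gleam must be a fully well-defined element of $\tfrac12\Z$, not merely well-defined modulo $\Z$: formula (\ref{sum of intersection}) computes it as an actual number, and the reconstruction needs its exact value. Second, the parity claim is false in general: traversing a boundary component of $\bar R_U$, the monodromy can interchange the two transverse sheets an \emph{odd} number of times --- this is exactly what happens along a singular circle of type $Y_{12}$ in Figure \ref{nbd_of_S1}, where one region winds twice around the circle. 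In that case a single chosen sheet does not give a framing at all, and the gleam is genuinely half-integral; if your ``always even'' claim held, every gleam would be an integer, which is wrong. The correct mechanism is to use the \emph{unordered pair} of transverse sheets as a $2$-valued section of the normal circle bundle over $\partial\bar R_U$ (equivalently, a section of the quotient by the fiberwise involution, or a lift to the fiberwise double cover); the relative Euler number with respect to this structure is canonically a half-integer, integral precisely when the swap parity around each boundary component is even. Separately, in the uniqueness direction you should say why the reconstruction recovers all of $M$ rather than a neighborhood of $X$: since $M$ collapses onto $X$, $M\cong\Nbd(X;M)$, and the residual gluing ambiguity (fiberwise automorphisms of circle bundles over circles) must be shown to be absorbed into the framing data --- both points Turaev treats explicitly and your sketch passes over.
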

%----------------------------------------------------------------------------------

Each half-integer in the above is called a {\itshape gleam}. 
A simple polyhedron $X$ whose internal regions are equipped with a gleam 
is called a {\it shadowed polyhedron} 
and denoted by $(X,\gl )$ (or simply $X$). 

\begin{figure}
	\begin{center}
	\includegraphics[width=36mm]{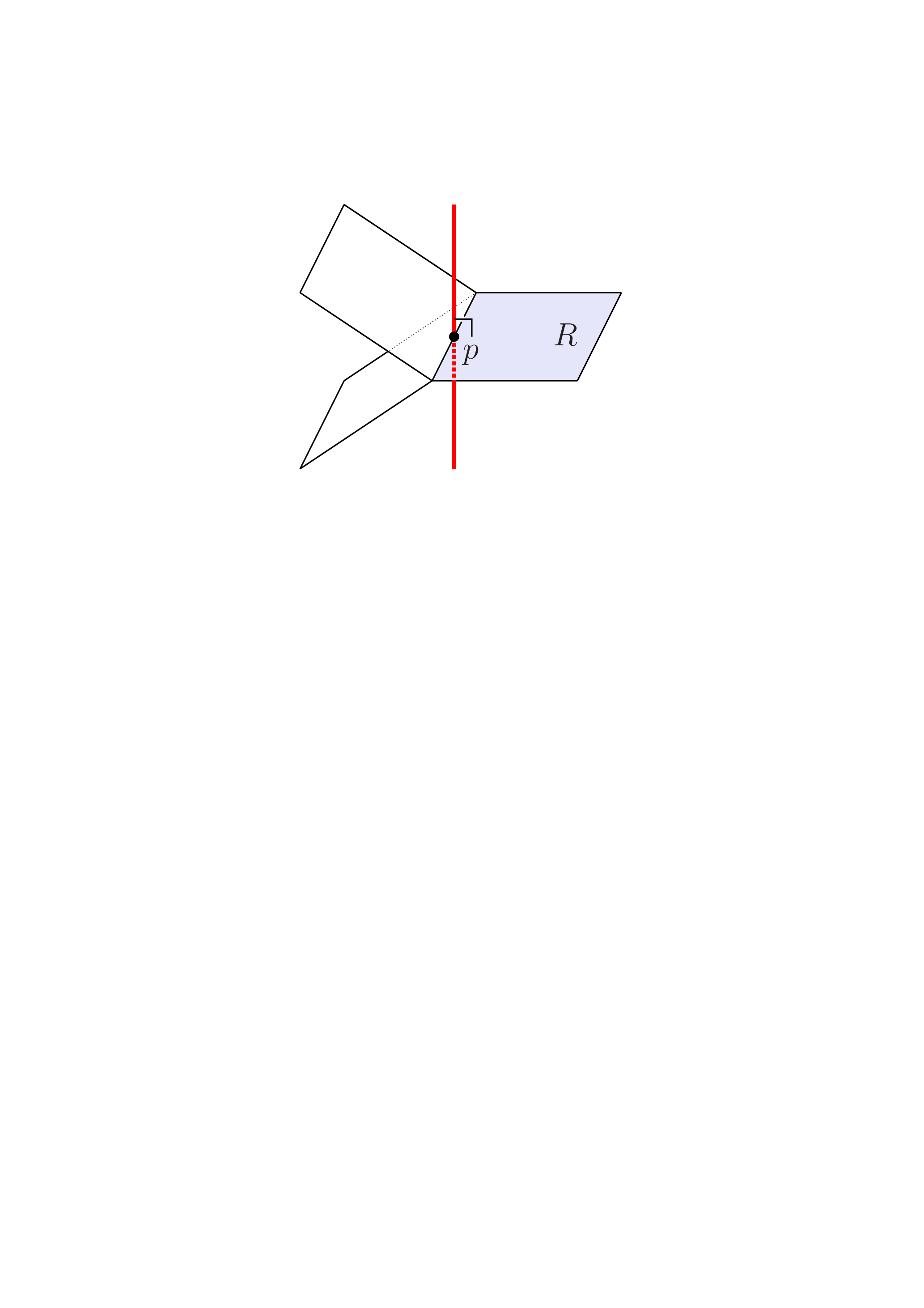}
	\caption{The fiber of the interval bundle at $p\in \partial R$.}
	\label{fiber}
	\end{center}
\end{figure}
\begin{remark}
\label{rmk; gleam}
As pointed out by Turaev \cite{turaev1994quantum}, 
a gleam generalizes the Euler number of closed surfaces embedded in oriented 4-manifolds. 
We can interpret the gleam as follows. 
Let $R$ be an intenal region of a shadow $X$ in a 4-manifold $M$ and 
let $p$ be a point of $\partial R$. 
By the locally flatness 
there exists a 3-ball $B^3$ around $p$ such that $B^3$ contains $\Nbd (p;X)$. 
We take interval which passes through $p$ and is transverse to $R$
after giving an auxiliary Euclidean metric for $B^3$ as shown in Figure \ref{fiber}. 
By taking intervals for each point of $\partial R$ continuously, 
we get an interval bundle over $\partial R$. 
We note that the interval bundle is a subbundle of the normal bundle over $\partial R$ in $M$. 
Let $R'$ be a small perturbation of $R$ 
such that $\partial R'$ is lying in the interval bundle. 
If $R'$ is generically chosen, then $R$ and $R'$ have only isolated intersections. 
Counting them with signs, we have 
\begin{align}
\label{sum of intersection}
\gl (R) = \frac{1}{2} \sharp (\partial R \cap \partial R') + \sharp (R\cap R').
\end{align}
For this formula 
we refer the reader to Carrega and Martelli \cite{Carrega2014ribbon}. 
\end{remark}

% We then introduce the following definition. 
We close this section with the definition of the shadow complexity. 
%----------------------------------------------------------------------------------
\begin{definition}
Let $M$ be a compact oriented $4$-manifold having a shadow. 
We define the {\it shadow complexity} $sc(M)$ of $M$ to be 
the minimum number of true vertices of a shadow of $M$.
\end{definition}
%----------------------------------------------------------------------------------
\begin{remark}
Costantino defined the shadow complexity for ``closed'' 4-manifolds 
in \cite{costantino2006complexity}. 
A shadow of a closed 4-manifold $M$ is defined as a shadow of a 4-manifold 
to which $M$ is obtained by attaching 3- and 4-handles. 
\end{remark}
%==================================================================================
\section{Proof of Theorem \ref{thm: s-collapse}}
%==================================================================================
In this section we introduce a proposition on PL topology 
and provide a lemma 
for the proof of Theorem \ref{thm: s-collapse}. 

%----------------------------------------------------------------------------------
\begin{proposition}
\label{prop; nbd are PLhomeo}
Let $M$ be an $n$-dimensional compact PL manifold and fix a triangulation $K$ of $M$. 
Let $L_i$ be a subcomplex of a double barycentric subdivision of $K$ for $i\in \{0,1 \}$. 
If $L_0 \searrow L_1$, then $\Nbd (|L_0| ;X)$ and $\Nbd (|L_1| ;X)$ are PL-homeomorphic. 
\end{proposition}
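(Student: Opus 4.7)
The plan is to reduce the statement to a single elementary simplicial collapse and then invoke the uniqueness part of the PL Regular Neighborhood Theorem. Since $L_0 \searrow L_1$ is by definition a finite sequence of elementary simplicial collapses through subcomplexes, if the result is established one step at a time and we transitively compose the resulting PL-homeomorphisms, the general case follows. So from now on assume $L_0 = L_1 \cup \sigma \cup \tau$ where $\sigma$ is a principal simplex of $L_0$ and $\tau$ is a free face of $\sigma$.

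The reason the hypothesis asks for $L_0$ and $L_1$ to sit inside a double barycentric subdivision $K''$ is exactly so that the simplicial regular neighborhood
\[
N_i := \Nbd(|L_i|; M) = \bigcup \{\,\Delta \in K''' : \Delta \cap |L_i| \neq \emptyset\,\}
\]
(taking a further derived subdivision if necessary to resolve the star) is automatically a codimension-zero PL submanifold of $M$ with $|L_i|$ in its interior, and collapses simplicially onto $|L_i|$. In other words, $N_i$ is a \emph{regular neighborhood} of $|L_i|$ in $M$ in the sense of Rourke--Sanderson. The key structural input I would cite is the characterization: any compact PL submanifold $N \subset M$ that contains a polyhedron $P$ in its interior and collapses onto $P$ is a regular neighborhood of $P$, and any two regular neighborhoods of $P$ in $M$ are PL-homeomorphic by an ambient PL isotopy fixing $P$.

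With these tools in hand, the main step is to show that $N_0 = \Nbd(|L_0|; M)$ is itself a regular neighborhood of $|L_1|$. The three conditions to check are:
\begin{itemize}
\item $N_0$ is a codimension-zero PL submanifold of $M$: this is immediate from the second-derived construction.
\item $|L_1|$ lies in the interior of $N_0$: since $|L_1| \subset |L_0| \subset \Int N_0$.
\item $N_0$ collapses onto $|L_1|$: we have $N_0 \searrow |L_0|$ by the simplicial regular neighborhood collapse, and $|L_0| \searrow |L_1|$ by the single elementary collapse we reduced to; concatenating gives $N_0 \searrow |L_1|$.
\end{itemize}
Once $N_0$ is recognized as a regular neighborhood of $|L_1|$, uniqueness yields $N_0 \cong_{PL} N_1$.

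The main obstacle, such as it is, lies in justifying that the combinatorially defined $\Nbd(|L_i|; M)$ really is a regular neighborhood in the PL-theoretic sense — this is precisely why the hypothesis requires $L_i$ to be a subcomplex of a \emph{double} barycentric subdivision (so that a further derived subdivision produces the standard simplicial neighborhood that is known to be a PL manifold and to collapse onto $|L_i|$). After that technical point is in place, the argument is essentially a bookkeeping exercise combining simplicial collapses with the uniqueness theorem for regular neighborhoods.
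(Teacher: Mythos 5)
Your argument is correct and is essentially the paper's own proof: the paper simply cites Rourke--Sanderson (Lemma 3.25 and Theorem 3.26), and your proposal reconstructs exactly that argument --- the characterization of regular neighborhoods as compact manifold neighborhoods collapsing onto the polyhedron, applied after one elementary collapse at a time, followed by uniqueness of regular neighborhoods. You also correctly identify the role of the double barycentric subdivision in making the derived simplicial neighborhood a genuine regular neighborhood, which is the only technical point the citation hides.
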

For the proof of this proposition 
we refer the reader to \cite[Lemma 3.25, Theorem 3.26]{RourkeSanderson}.
%----------------------------------------------------------------------------------
\begin{remark}
A PL manifold has a unique smoothing in dimension $n\leq 6$ \cite{HirschMazur}. 
In our case $n=4$, 
Proposition \ref{prop; nbd are PLhomeo} with 
``PL-homeomorphic'' replaced by ``diffeomorphic'' also holds. 
\end{remark}
%----------------------------------------------------------------------------------
% Let $(X,\gl )$ be a shadowed polyhedron, 
% and $X_0$ a simple subpolyhedron in $X$. %onto which $X$ collapses. 
% Let $R$ be an internal region of $X_0$. 
% Then $R\setminus Sing(X)$ consists of some internal regions $R_1, \ldots R_k$ of $X$. 
% We define a gleam $\gl '$ of $R$ by 
% \[
%  \gl '(R) = \sum_{i=1}^{k} \gl(R_i) .
% \] 
% We note that $X_0$ is embedded into $M_X$ as a subpolyhedron of $X$, 
% where $M_X$ is a 4-manifold reconstructed from $(X,\gl )$. 
% Moreover let $M_{X_0}$ denote a 4-manifold reconstructed from $(X_0,\gl ')$

%----------------------------------------------------------------------------------
\begin{lemma}
\label{lem: emb shadow}
Let $(X,\gl )$ be a shadowed polyhedron, and $X_0$ a simple subpolyhedron of $X$. 
% Then there exists a canonical way to equip each internal region of $X_0$ 
% with a gleam from $(X,\gl )$ such that $M_{X_0} \cong \Nbd (X_0;M_X)$. 
Assign a gleam to each internal region of $X_0$ by formula (\ref{sum of gleams}). 
Then we have $M_{X_0} \cong \Nbd (X_0;M_X)$. 
\end{lemma}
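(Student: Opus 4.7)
My plan is to exhibit $X_0$ as a shadow of $N := \Nbd(X_0; M_X)$, apply Turaev's reconstruction to $N$ to obtain canonical gleams $\gl'$ with $N \cong M_{(X_0, \gl')}$, and then verify that $\gl'$ coincides with the sum-gleam prescribed by formula (\ref{sum of gleams}).

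For the first half, I would check the three defining conditions making $X_0$ a shadow of $N$. The collapse $N \searrow X_0$ is a standard PL-topological fact about regular neighborhoods of compact subpolyhedra in PL 4-manifolds. Local flatness of $X_0$ in $N$ is inherited from $X_0 \subset X \subset M_X$. The boundary identity $X_0 \cap \partial N = \partial X_0$ follows from the local product structure of $N$ near $\partial X_0$.

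For the second half, fix an internal region $R$ of $X_0$ and decompose $R = R_1 \cup \cdots \cup R_n$ along the interior triple lines $L \subset (\Int R) \cap Sing(X)$. A key geometric observation is that at any $p \in L$, the local model of $X_0$ at $p$ must be of type (i), because $p \notin Sing(X_0)$; hence the two sheets of $X$ at $p$ lying in $X_0$ combine into a smoothly embedded 2-plane, so globally $R$ is a smoothly embedded 2-manifold in $M_X$. I would then compute $\gl'(R)$ via the intersection formula (\ref{sum of intersection}) using a perturbation $R' = \bigcup_i R_i'$ assembled from perturbations of the $R_i$'s: on each interior triple line $L$, take the perturbation along the direction of the third sheet $H_3$ of $X$ at $L$ (the sheet not contained in $X_0$). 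This direction is transverse to $R$ and lies in the interval bundles of both adjacent $R_i$ and $R_j$, so the $R_i'$'s glue smoothly into a single perturbation of $R$ whose boundary lies in the interval bundle of $R$ over $\partial R$.

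Additivity of signed intersection numbers then gives
\[
\gl'(R) \;=\; \tfrac{1}{2}\#(\partial R \cap \partial R') + \#(R \cap R') \;=\; \sum_{i=1}^{n} \gl(R_i),
\]
since $R_i \cap R_j' = \emptyset$ for $i \neq j$ under small generic perturbations, and the contributions on each interior triple line $L$ from the two adjacent regions $R_i, R_j$ cancel against each other thanks to the common $H_3$-direction lift. The main obstacle will be to rigorously justify this cancellation of interior-boundary terms and to verify that the sum over the two sides of $L$ matches what the formula for $\gl'(R)$ sees on $\partial R$ alone; this requires particular care when the line bundle spanned by the $H_3$-direction over some interior $L$ is non-trivial (M\"obius), in which case the unavoidable zeros of the boundary section on $L$ must be shown to contribute zero in the signed sum.
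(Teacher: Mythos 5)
Your overall route is the paper's route: exhibit $X_0$ as a shadow of a manifold diffeomorphic to $\Nbd(X_0;M_X)$, invoke Turaev's reconstruction, observe that the two sheets of $X$ lying in $X_0$ along an interior triple line form a type (i) local model so that $R$ is smoothly embedded in $M_X$, glue the interval bundles of the $\partial R_i$ into a single bundle over the singular locus, and compute the gleam by the intersection formula (\ref{sum of intersection}). However, two steps need repair. First, the boundary identity $X_0\cap\partial N=\partial X_0$ is \emph{false} for the literal regular neighborhood $N=\Nbd(X_0;M_X)$: any component of $\partial X_0$ lying in the interior of $M_X$ (which is the typical situation, e.g.\ when $X_0$ arises by collapsing $X$) is contained in the interior of $N$, so $X_0\cap\partial N=\emptyset$ there and $X_0$ is not properly embedded. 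This is precisely why the paper's first paragraph shrinks: with $K$ the second barycentric subdivision of a triangulation of $X_0$ and $K'=K\setminus\{\tau\mid \tau\cap\partial X_0\ne\emptyset\}$, one has $K\searrow K'$, hence $\Nbd(X_0;M_X)\cong\Nbd(X_0';M_X)$ by Proposition \ref{prop; nbd are PLhomeo} with $X_0'=|K'|$, and $X_0$ \emph{is} proper and locally flat in $\Nbd(X_0';M_X)$. Without this (or an equivalent inward isotopy of $X_0$), your appeal to Turaev's reconstruction is not licensed.

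Second, your bookkeeping along $S=R\cap Sing(X)$ aims at the wrong target. You try to choose the perturbation as a nowhere-zero section in the $H_3$-direction so that interior crossings along $S$ disappear, and you correctly note this fails when that line bundle over a circle of triple points is a \Mobius band; but your proposed resolution, that the forced zeros ``contribute zero in the signed sum,'' is not what is needed and not true in general. The paper instead takes a \emph{generic} perturbation $S'$ of $S$ in the interval bundle $L$ and simply counts: each point of $S\cap S'$ (away from true vertices) is sandwiched between exactly two regions $R_i,R_j$, so it contributes $\sharp(S\cap S')$ once to the interior term $\sharp(R\cap R')$ of $\gl(R)$ and twice with weight $\tfrac12$ to $\sum_i\tfrac12\sharp(\partial R_i\cap\partial R_i')$ in $\sum_i\gl(R_i)$; the contributions \emph{match} on the two sides of the identity rather than cancel or vanish, so the \Mobius case requires no special care. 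This generic-section device also fixes a case your decomposition overlooks: $S$ may contain true vertices of $X$ (where your single $H_3$-direction is undefined, since $X_0$ is a type (i) sheet through the vertex and two transverse sheets emanate from it); the paper's only requirement there is that the images of the true vertices under the perturbation avoid the zero section, so no intersection point of $R\cap R'$ lands on a vertex. With these two corrections your argument becomes the paper's proof.
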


% The assignment of the gleams in Lemma \ref{lem: emb shadow} can be 
% formulated as follows. 
% Let $R$ be an internal region of $X_0$. 
% We observe that $Sing(X_0)\subset Sing(X)$ and 
% $R$ might be split by $Sing(X)$ into some internal regions $R_{1}, \ldots R_{n}$ of $X$ ($n\geq 1$).
% Then the gleam of $R$ is given by 
% \begin{align}
% \label{sum of gleams}
% \gl (R) = \sum_{i=1}^{n} \gl(R_{i}) .
% \end{align}
% The proof is based on the observation in Remark \ref{rmk; gleam} 
% and the formula (\ref{sum of intersection}). 
%
% The normal bundle of $R$ in $M_X$ can be decomposed into 
% the subbundles over $R_{1}, \ldots R_{n}$. 
% As pointed out by Turaev \cite{turaev1994quantum}, 
% a gleam generalizes the Euler number of closed surfaces embedded in oriented 4-manifolds.
% The formula (1) is based on this consideration,
% and the proof is by carrying out 
% the Turaev's reconstruction with respect to the above decomposition.

\begin{proof}
% [Proof of Lemma \ref{lem: emb shadow}]
Let $K$ be the second barycentric subdivision of a given triangulation of $X_0$, 
and set 
\[
K'= K\setminus \{\tau \in K\mid \tau \cap \partial X_0 \ne \emptyset \}.
\]
Then $K'$ is a subcomplex of $K$, and $K\searrow K'$. 
We have $\Nbd (X_0;M_X) \cong \Nbd (X'_0;M_X)$ by Proposition \ref{prop; nbd are PLhomeo}, 
where $X'_0=|K'|$. 
Note that $X_0$ is proper and locally flat in $\Nbd (X'_0;M_X)$. 
% Note that $X_0$ is locally flat in $\Nbd (X'_0;M_X)$. 
% If we suppose that $\Nbd (X'_0;M_X)$ is taken as the star neighborhood 
% with respect to the above triangulation, 
% the polyhedron $X$ is proper in $\Nbd (X'_0;M_X)$.
Hence $X_0$ is a shadow of $\Nbd (X'_0;M_X)$. 

By Turaev's reconstruction, 
there should exist gleams for the internal regions of $X_0$ so that 
the 4-manifold reconstructed from them is diffeomorphic to $\Nbd (X'_0;M_X)$. 
% We verify that such gleams coincide with ones given by formula (\ref{sum of gleams})
% in the following. 
It suffices to show that such gleams coincide with ones given by formula (\ref{sum of gleams}). 

Let $R$ be an internal region of $X_0$, 
and set $S=R\cap Sing(X)$. 
If $S=\emptyset$, 
the region $R$ is also an internal region for $X$. 
Hence it is obvious that their gleams coincide by Remark \ref{rmk; gleam}. 

We turn to the case $S\ne\emptyset$. 
As mentioned above, the region $R$ is split into internal regions $R_1,\ldots ,R_n$ of $X$.
% in other words, we have $R=\bigcup_{i=1}^{n} R_i$. 
\begin{figure}
	\begin{center}
	\includegraphics[width=100mm]{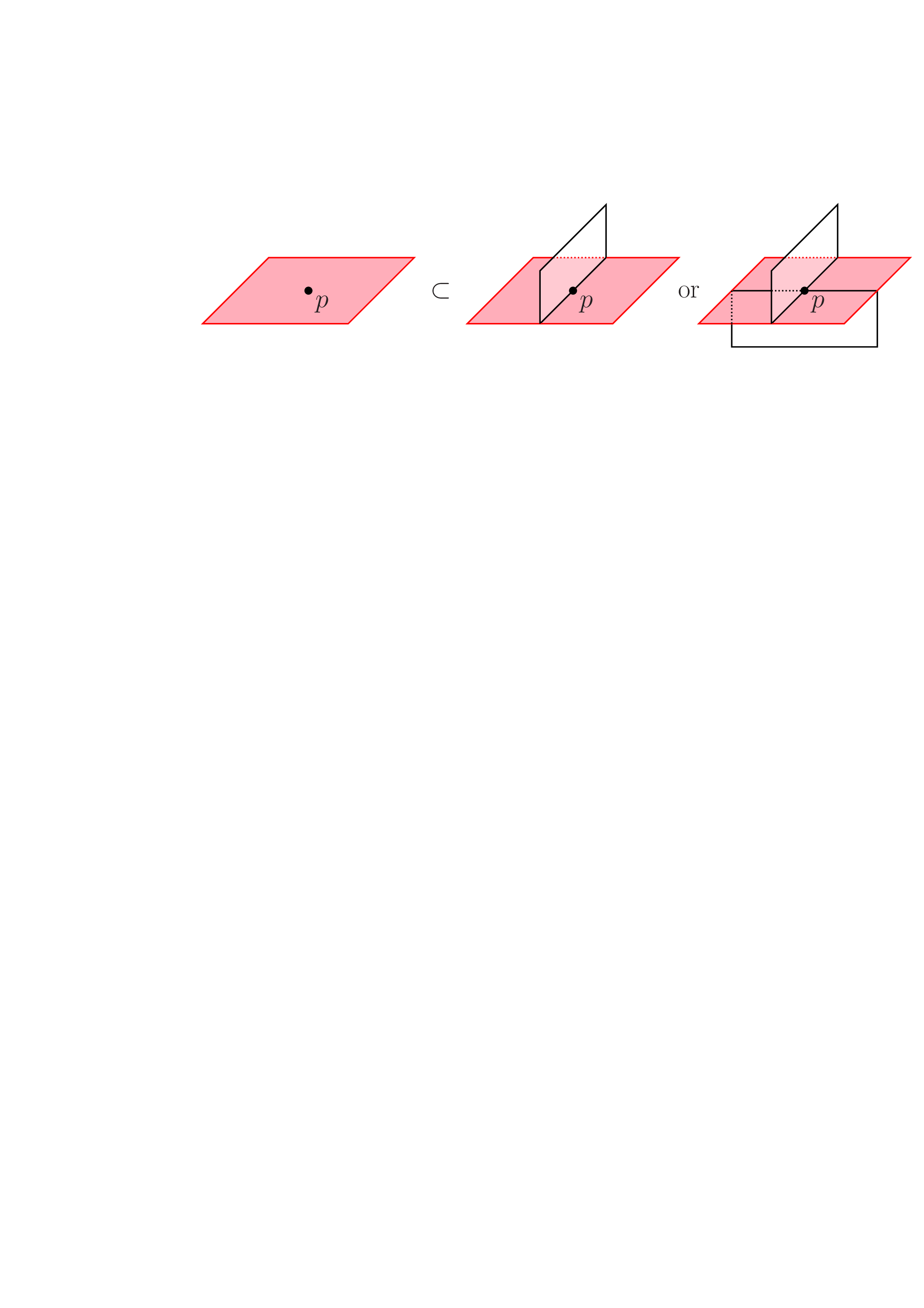}
	\caption{The leftmost picture shows $\Nbd (p;R)$.
The two right pictures are $\Nbd (p;X)$.}
	\label{smooth emb}
	\end{center}
\end{figure}
Let $p$ be a point contained in $S$. 
Then $\Nbd (p;X)$ can be described in either of the two right parts of Figure \ref{smooth emb}, 
where the colored areas indicate $\Nbd (p;R)$. 
Note that $\Nbd (p;R)=\Nbd (p;X_0)$. 
We assume that these pictures are drawn in $\R ^3$ and  
consider the regular neighborhood of $\Nbd (p;X)$. 
Carrying out Turaev's reconstruction with such 3-dimentional blocks, 
we get $M_X$ in which $R$ is smoothly embedded. 

Next we consider the interval bundle over $\partial R_i$ 
as mentioned in Remark \ref{rmk; gleam} for $i \in \{1,\ldots ,n\}$. 
By the smoothness of $R$, if $\partial R_i \cap \partial R_j \ne \emptyset$, 
the restrictions of the interval bundles of 
$\partial R_i$ and $\partial R_j$ to $\partial R_i\cap\partial R_j$ coincide. 
Hence the union of the interval bundles is regarded as 
an interval bundle over $S\cup \partial R$,
and we denote it by $L$. 
Let $S'$ be a generic small perturbation of $S$ in $L$ such that the images of the true vertices of $X$ 
do not lie in the zero section. 
Then let $R'$ ba a generic small perturbation of $R$ such that $S'\subset R'$ and 
$\partial R'$ lies in the restriction of the interval bundle $L$ to $\partial R$. 
By restricting $R'$ we get a small perturbation $R'_i$ of $R_i$ as in Remark \ref{rmk; gleam}
for $i \in \{1,\ldots ,n\}$. 
%
% For each $q\in (R\cap Sing(X))\cup \partial R$, 
% we take interval which passes through $q$ and is transverse to $R$ as in Remark \ref{rmk; gleam}, and 
% we get an interval bundle over $q\in (R\cap Sing(X))\cup \partial R$. 
%
% $M_X$ is reconstructed 
% so that $R$ is smoothly embedded in $M_X$ as shown in Figure \ref{smooth emb}, 
% where $p$ is a point contained in $R\cap Sing(X)$. 
% If $p$ is a true vertex for $X$, 
% then $\Nbd (p;X)$ is as shown in the rightmost part of Figure \ref{smooth emb}. 
% For the two right hand side of Figure \ref{smooth emb}, 
%
% For each $i\in \{1,\ldots ,n\}$, let $R'_i$ be a small perturbation of $R_i$ such that 
% $\partial R'_i$ is lying in the interval bundle as mentioned in Remark \ref{rmk; gleam}.
% Then if $\partial R_i \cap \partial R_j \ne \emptyset$, 
% the restrictions of 
% interval bundles of $\partial R_i$ and $\partial R_j$ to $\partial R_i\cap\partial R_j$ coincide.
% Hence $\bigcup_{i=1}^{n} R'_i$ is a small perturbation of $R$.
Note that each point $p\in S$ except for a true vertex 
is sandwiched between $R_i$ and $R_j$ for some $i,j\in\{ 1,\ldots ,n\}$, 
in other words, the point $p$ belongs to both $\partial R_i$ and $\partial R_j$. 
By formula (\ref{sum of intersection}), we have the following: 
\begin{align*}
\gl (R) =& \frac{1}{2} \sharp \left(\partial R \cap \partial R'\right) + \sharp \left(R\cap R'\right) \\
%========
=& \frac{1}{2} \sum_{i=1}^{n} \sharp 
\left( \left(\partial R_i \cap \partial R\right) \cap \left(\partial R'_i \cap \partial R'\right) \right)
+ \sharp \left( S\cap S'\right) 
+\sum_{i=1}^{n} \sharp \left( R_i \cap R'_i \right) \\
%========
=& \frac{1}{2} \sum_{i=1}^{n} \sharp \left( \partial R_i \cap \partial R'_i\right) 
+\sum_{i=1}^{n} \sharp \left( R_i \cap R'_i\right) \\
%========
=& \sum_{i=1}^{n} \gl(R_{i}),
\end{align*}
% Thus we get formula (\ref{sum of gleams}). 
and the proof is completed. 
\end{proof}
%----------------------------------------------------------------------------------

%----------------------------------------------------------------------------------
\begin{proof}
[Proof of Theorem \ref{thm: s-collapse}]
There exist diffeomorphisms 
\[
M_X \cong \Nbd (X;M_X) \cong \Nbd (X_0;M_X) \cong M_{X_0}
\] 
by Turaev's reconstruction, Proposition \ref{prop; nbd are PLhomeo} and 
Lemma \ref{lem: emb shadow}.% respectively.  
\end{proof}

%==================================================================================
\section{Proof of Theorem \ref{thm: no vertex}}
%==================================================================================
In this section we study simple polyhedra and its graph notation introduced by Martelli, 
and give the proof of Theorem \ref{thm: no vertex}. 

%==================================================================================
\subsection{Basic lemmas for simple polyhedra}
%==================================================================================
We first introduce convenient lemmas. 
We note that the first one was shown by Ikeda in \cite[Lemma 12]{ikeda1971acyclic}. 
%----------------------------------------------------------------------------------
\begin{lemma}
[Ikeda \cite{ikeda1971acyclic}]
\label{lem:no genus}
Any region of a simple polyhedron $X$ is orientable and 
has no genus if $H_1 (X;\Z )=0$.
\end{lemma}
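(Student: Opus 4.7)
The plan is to combine Mayer--Vietoris with the classification of compact surfaces.
Fix a region $R$ of $X$ and let $\bar R$ be its closure, $Y=\overline{X\setminus R}$ the closure of the complement, and $F=\bar R\cap Y$ the frontier of $R$ (a subgraph of $Sing(X)$, together with possibly some arcs of $\partial X$ in the boundary-region case). Thickening to open neighborhoods, the Mayer--Vietoris sequence for the cover $\{\bar R,Y\}$ of $X$, combined with the hypothesis $H_1(X;\Z)=0$, yields the surjection
\[
H_1(F;\Z)\twoheadrightarrow H_1(\bar R;\Z).
\]

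To make use of this surjection I would pass to the intrinsic completion $\hat R$ of $R$ as a compact $2$-manifold with boundary, equipped with a canonical continuous map $\pi\colon\hat R\to\bar R$ extending the inclusion of $R$. The subtle point is that $\pi$ need not be injective on $\partial\hat R$: several sheets of $R$ can abut a common triple line of $Sing(X)$, so $\bar R$ is realized as the pushout of $\hat R\leftarrow\partial'\hat R\xrightarrow{\phi}F$, where $\partial'\hat R\subset\partial\hat R$ denotes the part mapped into $F$. The pushout Mayer--Vietoris sequence
\[
H_1(\partial'\hat R)\xrightarrow{(j_*,-\phi_*)}H_1(\hat R)\oplus H_1(F)\to H_1(\bar R)\to H_0(\partial'\hat R)\to\cdots
\]
combines with the first surjection by a short diagram chase: given $\alpha\in H_1(\hat R)$, choose $\beta\in H_1(F)$ with the same image as $\alpha$ in $H_1(\bar R)$; then $(\alpha,-\beta)$ lies in the kernel of the middle map, hence lifts to some $\gamma\in H_1(\partial'\hat R)$ with $j_*\gamma=\alpha$. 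This produces
\[
H_1(\partial\hat R;\Z)\twoheadrightarrow H_1(\hat R;\Z).
\]

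The proof would finish with the classification of compact surfaces: a connected compact surface $\hat R$ for which $H_1(\partial\hat R;\Z)\to H_1(\hat R;\Z)$ is surjective must be orientable of genus $0$. This is a straightforward case check: for orientable $\hat R$ of genus $g$ with $b$ boundary components, $b_1(\hat R)=2g+b-1$ (or $0$ if $b=g=0$) and the image of $H_1(\partial\hat R)$ has rank at most $b-1$, forcing $g=0$; for non-orientable $\hat R$ of genus $k\ge 1$ with $b$ boundary components, either $b_1(\hat R)=k+b-1>b$ rules out surjectivity by rank, or $k=1$ and the outer boundary circle represents twice the core of the M\"obius band, so the image has index $2$. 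Hence $R=\mathrm{int}(\hat R)$ is orientable and has no genus.

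The main obstacle is precisely the possible failure of $\bar R$ to be a topological surface when several sheets of $R$ meet along the same triple line of $Sing(X)$. If this never occurred, the first Mayer--Vietoris surjection $H_1(F)\twoheadrightarrow H_1(\bar R)$ would already finish the argument; in general, however, introducing the intrinsic completion $\hat R$ together with the pushout description of $\bar R$ is what reduces the problem to a clean statement about honest compact surfaces and makes the classification theorem applicable.
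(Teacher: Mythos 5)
Your proof is correct, but it takes a genuinely different route from the one the paper attributes to Ikeda. The paper's sketch goes in the opposite direction: Ikeda caps off all boundary components of (the completion of) $R$ by disks to form a closed surface $\tilde R$, constructs a new simple polyhedron containing $\tilde R$, and applies Mayer--Vietoris in that auxiliary polyhedron to show $H_1(\tilde R;\Z)=0$, whence $\tilde R\cong S^2$ and $R$ is orientable of genus zero. You instead stay inside $X$: the Mayer--Vietoris sequence for $X=\bar R\cup\overline{X\setminus R}$ gives surjectivity of $H_1(F;\Z)\to H_1(\bar R;\Z)$, and the homotopy-pushout description $\bar R=\hat R\cup_{\phi}F$ converts this, by a correct diagram chase, into surjectivity of $H_1(\partial\hat R;\Z)\to H_1(\hat R;\Z)$, after which the classification of compact surfaces with boundary finishes the argument. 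The difficulty you isolate --- that $\bar R$ need not be a surface because several sheets of $R$ may abut the same triple line --- is exactly what Ikeda's capping construction is designed to sidestep; your pushout formulation confronts it directly, at the cost of a two-step Mayer--Vietoris argument, and buys a self-contained proof that never has to verify that an auxiliary polyhedron is simple and still has vanishing first homology, while Ikeda's reduction lands in the cleaner setting of closed surfaces, where $H_1=0$ immediately forces $S^2$ with no analysis of boundary classes. Two minor points: the frontier $F$ lies entirely in $Sing(X)$, since a boundary region contains its arcs of $\partial X$ (those points are not singular), so your parenthetical about arcs of $\partial X$ in $F$ is unnecessary --- though your restriction to $\partial'\hat R$ handles boundary regions correctly in any case; and in the final case check, the closed non-orientable case $b=0$, $k=1$ (i.e.\ $\R P^2$) is not literally covered by either of your two branches --- it is ruled out by the torsion $\Z/2$ in $H_1$, or more simply because a region with empty frontier is an entire component of $X$ and hence already has vanishing first homology.
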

The proof of Lemma \ref{lem:no genus} is given by using Mayer Vietoris exact sequence. 
For any region $R$ of $X$, he considered a closed surface $\tilde{R}$ 
obtained from $R$ by capping off the all boundary components of $R$ by disks
and constructed a new simple polyhedron that contains $\tilde{R}$. 
He checked that $H_1(\tilde{R};\Z )$ must vanish. 
%----------------------------------------------------------------------------------
%----------------------------------------------------------------------------------
\begin{lemma}
\label{lem:scc splits P}
Let $X$ be an acyclic simple polyhedron 
and $\gamma$ be a simple closed curve in $X\setminus Sing(X)$. 
Then $\gamma$ splits $X$ into two parts such that 
one of them is acyclic and the other is a homology-$S^1$.
Moreover the first homology of the latter is generated by $\gamma$.
\end{lemma}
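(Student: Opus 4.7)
The plan is to decompose $X$ along an annular neighborhood of $\gamma$ and read off the claim from a Mayer--Vietoris calculation driven by the acyclicity of $X$. First, since $\gamma \cap Sing(X) = \emptyset$, the curve $\gamma$ lies in a single region $R$ of $X$, and Lemma \ref{lem:no genus} identifies $R$ as an orientable genus-zero surface. Any simple closed curve in such a planar surface separates it: capping off the boundary components of $R$ embeds it in $S^2$, where the Jordan curve theorem applies, and each capping disk is forced to one side of $\gamma$ or the other. I will therefore pick a closed annular neighborhood $A \subset R$ of $\gamma$ with $\partial A = \gamma' \sqcup \gamma''$ and set $B = \overline{X \setminus A}$, producing a Mayer--Vietoris pair $X = A \cup B$ with $A \cap B = \gamma' \sqcup \gamma''$.

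The second step is to run the reduced Mayer--Vietoris sequence. Substituting $H_*(A) \cong H_*(S^1)$, $H_*(A \cap B) \cong H_*(S^1)^{\oplus 2}$, and $H_n(X) = 0$ for $n \geq 1$, the sequence collapses and forces $\tilde H_0(B) \cong \Z$, $H_1(B) \cong \Z$, and $H_n(B) = 0$ for $n \geq 2$. Hence $B$ has exactly two components $B_1, B_2$, and up to relabelling $B_1$ is acyclic while $B_2$ is a homology $S^1$. Thickening back to $X$, I write $A = A_1 \cup A_2$ for the two sub-annuli meeting along $\gamma$ and set $X_i = B_i \cup A_i$. Then $X = X_1 \cup X_2$, $X_1 \cap X_2 = \gamma$, and each $X_i$ deformation retracts onto $B_i$, so $X_1$ is acyclic and $X_2$ is a homology $S^1$, as required.

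For the final assertion I will trace $[\gamma]$ through the Mayer--Vietoris map $H_1(A \cap B) \to H_1(A) \oplus H_1(B)$. Injectivity of this map comes from $H_2(X) = 0$, and exactness at the next term shows its image has cokernel $H_1(X) = 0$ in $H_1(A) \oplus H_1(B)$. Combined with the facts that $\gamma'$ and $\gamma''$ both represent the generator of $H_1(A) \cong \Z$, and that $\gamma' \subset B_1$ is null-homologous in $B$, this forces $[\gamma'']$ to generate $H_1(B_2) \cong \Z$. Since $\gamma$ and $\gamma''$ cobound the sub-annulus $A_2$ inside $X_2$, they represent the same class in $H_1(X_2)$, so $[\gamma]$ generates $H_1(X_2)$.

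The step I expect to be most delicate is the last one, where some care with Mayer--Vietoris sign conventions is required to conclude that $\gamma$ itself, rather than some integer multiple, generates the first homology of the homology-$S^1$ side. Once the planar structure of $R$ is used to produce the annular decomposition, the remainder of the argument follows formally from the long exact sequence.
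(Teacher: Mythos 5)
Your proof is correct and takes essentially the same approach as the paper's: both split $X$ along an annular neighborhood of $\gamma$ (an annulus by Lemma \ref{lem:no genus}) and run Mayer--Vietoris, using acyclicity of $X$. The only differences are cosmetic --- the paper counts the two components of the complement via an Euler characteristic computation and obtains the generation statement from a second Mayer--Vietoris sequence for the decomposition $X = X_0 \cup X_1$ with intersection $\gamma$, whereas you read both off the first sequence (your opening Jordan curve paragraph is superfluous, since the $\tilde H_0$ computation already yields the separation).
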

\begin{proof}
Since any region of $X$ is orientable by Lemma \ref{lem:no genus}, 
$\Nbd (\gamma ;X)$ is homeomorphic to an annulus. 
Set $\Gamma = \Nbd (\gamma ;X)$ and $X'=X\setminus \Int \Nbd (\gamma ;X) $. 
Using the Mayer Vietoris exact sequence for the decomposition $X=\Gamma \cup X'$, 
we have the isomorphism 
$H_{q}(\Gamma ;\Z )\oplus H_{q}(X';\Z )\cong H_{q} (\Gamma \cap X';\Z )$ for $q\geq 1$. 
Hence $H_{q}(X';\Z )=0$ for $q\geq 2$ and 
$H_{1}(X';\Z )\cong \Z$. 
% The homology group $H_{1}(P';\Z )$ is generated 
% by one of two generators of $H_{1} (\Gamma \cap P';\Z ) = H_{1} (\partial \Gamma;\Z )$,
% both of which are homologous to $\gamma$ in $\Gamma$. 
Moreover $\chi (X')=1$ holds from the following equality 
\[
\chi (X) = \chi (\Gamma ) +\chi (X') - \chi (\Gamma \cap X'). 
\]
Hence $\text{rank}H_{0}(X';\Z ) =2 $, that is, 
$X'$ has two connected components: 
one of them is acyclic and the other is a homology-$S^1$.
Let $X_0$ be the acyclic connected component of $X'$ and 
$X_1$ the other component. 
We regard $X$ as the union $X_0 \cup X_1$. 
From the Mayer Vietoris exact sequence for this decomposition, 
it follows that $\gamma$ generates $H_1(X_1 ;\Z)$.
\end{proof}
%----------------------------------------------------------------------------------

%==================================================================================
\subsection{Martelli's graph encoding a simple polyhedron}
%==================================================================================
Let $X$ be a simple polyhedron whose singular set is a disjoint union of circles. 
In \cite{martelli2010four}, Martelli introduced a graph encoded from $X$ 
and classified closed $4$-manifolds with shadow complexity zero. 

\begin{figure}[t]
\begin{center}
	\includegraphics[width=6.5cm]{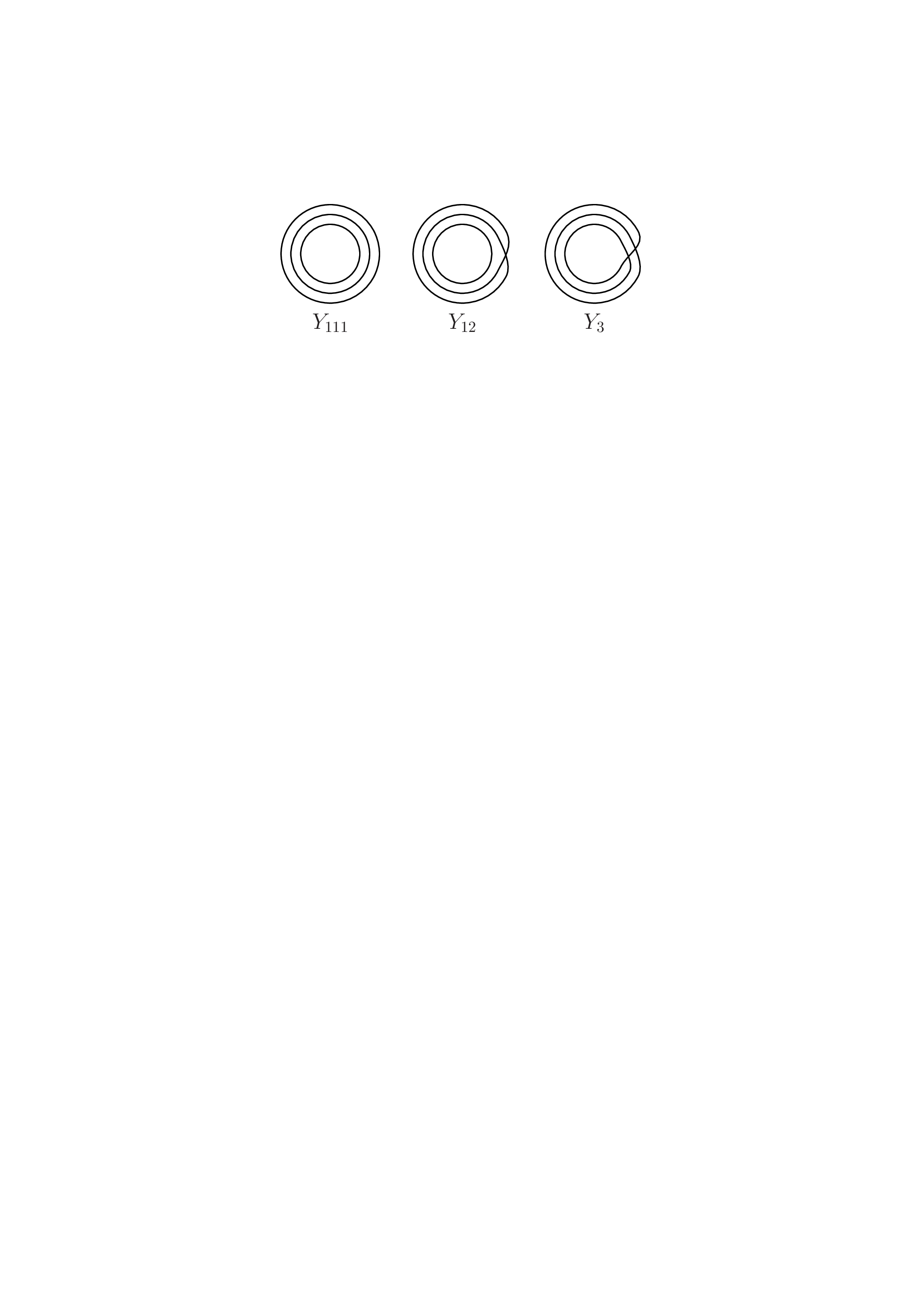}
	\caption{The three types of $Y$-bundle over $S^1$. 
These pictures indicate the only boundaries of them. }
	\label{nbd_of_S1}
\end{center}
\end{figure}
A regular neighborhood of $S^1 \subset Sing(X)$ has a structure of $Y$-bundle over $S^1$, 
where $Y$ is a cone of three points. 
There are three topological types $Y_{111}, Y_{12}$ and $Y_{3}$, 
and they are shown in Figure \ref{nbd_of_S1}. 
Each connected component of $X\setminus \Nbd (Sing(X);X)$ is a surface 
homeomorphic to a region of $X$. 
Any surface is decomposed into disks, pairs of pants and \Mobius strips. 
Hence we have the following. 
%----------------------------------------------------------------------------------
\begin{proposition}
[Martelli \cite{martelli2010four}]
\label{prop: decomposition of P}
Any simple polyhedron whose singular set is a disjoint union of circles 
is decomposed along simple closed curves into pieces homeomorphic to 
$D^2$, a pair of pants, the \Mobius strip, $Y_{111}$, $Y_{12}$ or $Y_{3}$. 
\end{proposition}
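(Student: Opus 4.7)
The plan is a two-step decomposition. First, let $U = \Nbd(Sing(X); X)$ be a regular neighborhood of the singular set. By the discussion preceding the statement, each connected component of $U$ is a $Y$-bundle over the corresponding circle of $Sing(X)$, hence homeomorphic to one of $Y_{111}$, $Y_{12}$, or $Y_3$ according to its monodromy, and its frontier in $X$ is a disjoint union of circles ($3$, $2$, or $1$ in number, respectively). Cutting $X$ along these frontier circles separates $X$ into the $Y$-bundle pieces on one side and $X \setminus \Int U$ on the other. Since every point of $X \setminus Sing(X)$ has a regular neighborhood of type (i) or (iv), $X \setminus \Int U$ is a disjoint union of compact $2$-manifolds with boundary.

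Second, I would decompose each surface component $S$ of $X \setminus \Int U$ along simple closed curves in its interior into pieces homeomorphic to $D^2$, a pair of pants, or a \Mobius strip. By the classification of compact surfaces, $S$ is either an orientable surface $\Sigma_{g,n}$ of genus $g$ with $n$ boundary components or a non-orientable surface $N_{k,n}$ with $k$ cross-caps and $n$ boundary components, and I proceed by induction on $-\chi(S)$. The three atoms are themselves the base cases. Closed components (which occur when a region of $X$ is a closed surface disjoint from $Sing(X)$) are handled directly: $S^2$ cuts along an equator into two disks; $T^2$ cuts along a non-separating curve into an annulus, which in turn cuts along a null-homotopic curve into a disk and a pair of pants; $\R P^2$ cuts along the boundary of a small disk into a disk and a \Mobius strip; the Klein bottle cuts along its core circle into two \Mobius strips. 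For the inductive step on surfaces of higher complexity, a non-separating essential simple closed curve reduces the orientable genus or the cross-cap number, while a curve parallel to the boundary of a pair of pants or a \Mobius strip neighborhood splits off an atomic piece. Combining the two steps yields the claimed decomposition of $X$.

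The main obstacle I anticipate is organizing the induction uniformly over orientable and non-orientable surfaces, in particular producing the \Mobius strip atoms cleanly whenever $S$ is non-orientable, and making sure that closed components (which receive no boundary circles from the first step) are handled by an initial cut before the induction begins. Apart from that, the argument is a standard pants-type decomposition carried out in the slightly broader setting that also permits non-orientable atoms.
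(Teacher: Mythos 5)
Your proposal is correct and follows essentially the same route as the paper, which (citing Martelli) simply cuts off the $Y$-bundle neighborhoods $Y_{111}$, $Y_{12}$, $Y_3$ of the singular circles and then invokes the standard fact that the remaining compact surfaces decompose into disks, pairs of pants and \Mobius strips. Your added details---the induction on $-\chi$ and the explicit treatment of closed components---only flesh out what the paper asserts in one line, so no further changes are needed.
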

%----------------------------------------------------------------------------------
A decomposition of $X$ as in Proposition \ref{prop: decomposition of P} 
provides a graph $G$ consisting of some edges and vertices 
(B), (D), (P), (2), (111), (12) or (3) as in Figure \ref{vertices}. 
\begin{figure}[t]
\begin{center}
	\includegraphics[width=70mm]{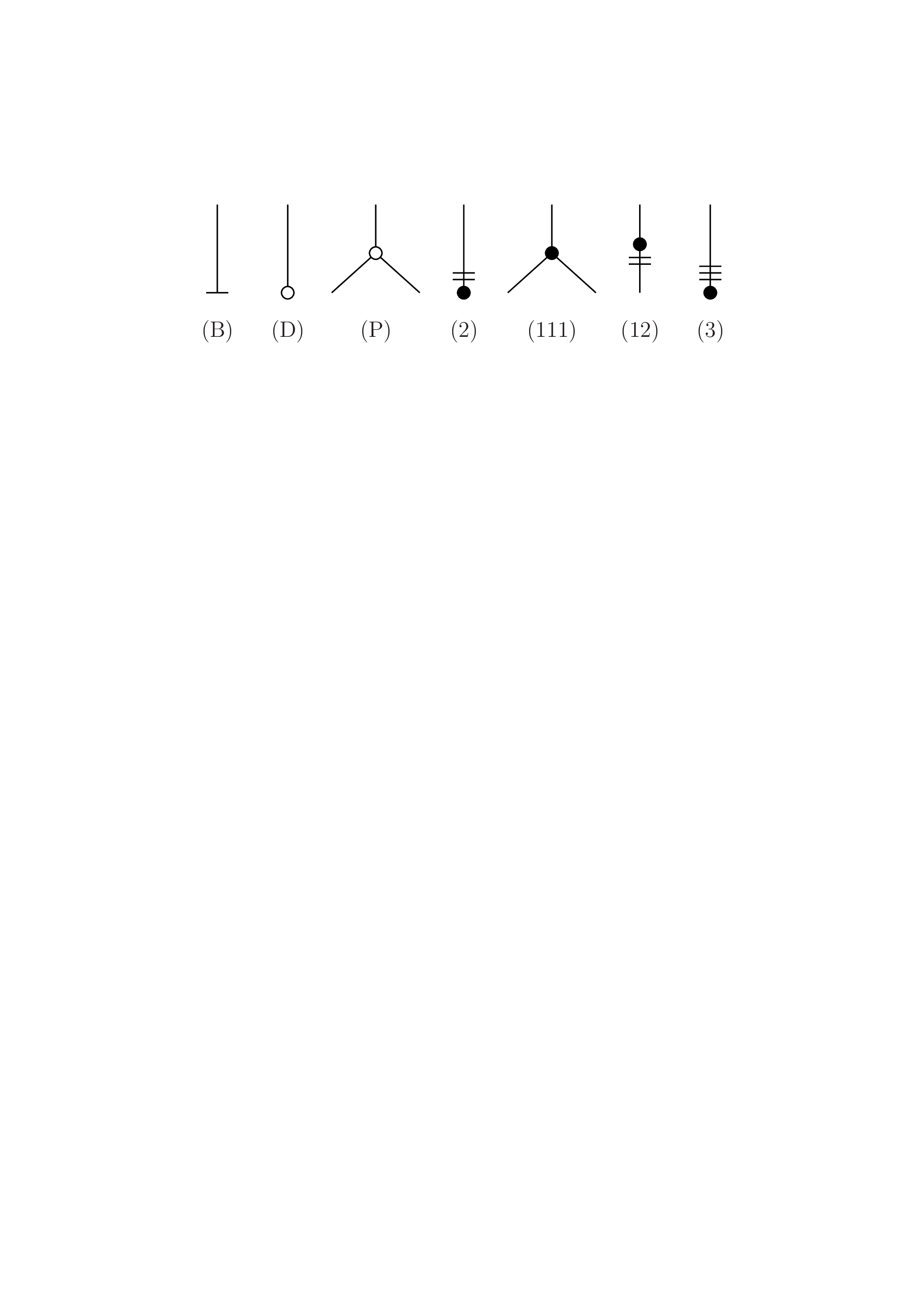}
	\caption{A simple polyhedron without true vertices is encoded by a graph having these vertices.}
	\label{vertices}
\end{center}
\end{figure}
The vertices of type (D), (P), (2), (111), (12) and (3) denote some portions homeomorphic to 
$D^2$, a pair of pants, the \Mobius strip, $Y_{111}$, $Y_{12}$ and $Y_{3}$ respectively. 
A vertex of type (B) denotes a boundary component of $X$. 
Note that each edge encodes a simple closed curve along which $X$ decomposes 
except the edges adjoining a vertex of type (B). 
% We also note that there is a distinction of two edges adjoining the vertex of type (12): 
% the edge marked with two lines corresponds to a simple closed curve winding twice along 
% the circle in $Sing(X)$. 
We also note that we distinguish the two edges adjoining the vertex of type (12): 
the edge marked with two lines corresponds to a simple closed curve winding twice along 
the circle in $Sing(X)$. 

As Martelli said, 
we can uniquely reconstruct the simple polyhedron $X$ 
from a pair consisting of a graph $G$ and a map $\beta :H_1 (G;\Z _2)\to \Z _2$ . 
It is necessary to choose homeomorphisms that glues polyhedral pieces at each edge of $G$ 
since there are two self-homeomorphisms of $S^1$, orientation-preserving and -reversing, 
up to isotopy. 
It is encoded by a map from $H_1 (G;\Z _2)$ to $\Z _2$. 

\begin{figure}[t]
\begin{center}
	\includegraphics[width=11cm]{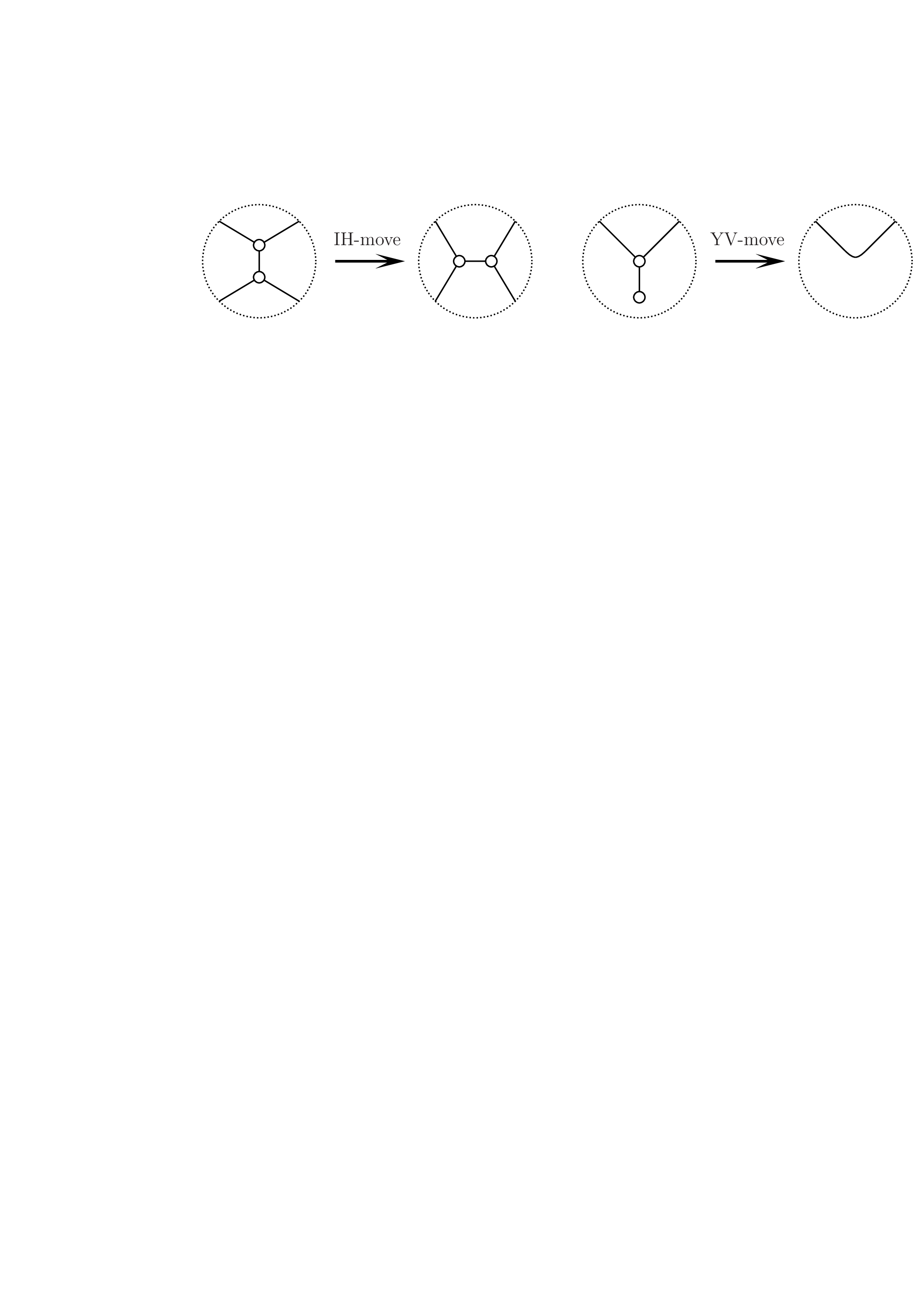}
	\caption{The move in the left part of the figure, called {\itshape IH-move}, 
corresponds to A-move for pants-decompositions in \cite{hatcher1999pants}. 
The move in the right part of the figure, called {\itshape YV-move}, 
means that an annulus plays a role of connecting two polyhedral pieces.} 
	\label{IH- and YV- moves}
\end{center}
\end{figure}

The graph $G$ that describes $X$ is not unique 
since a surface decomposes into disks, pairs of pants and \Mobius strips in several ways. 
There are some local moves as two examples shown in Figure \ref{IH- and YV- moves} 
that do not change the topological type of the polyhedron. 
We call the two moves in Figure \ref{IH- and YV- moves} 
{\itshape IH-move} and {\itshape YV-move}. 

%----------------------------------------------------------------------------------
\begin{definition}
If there exists an edge connecting two vertices $u$ and $v$ of a graph, 
then $u$ and $v$ are said to be {\itshape adjacent}. 
\end{definition}
%----------------------------------------------------------------------------------
%==================================================================================
\subsection{Acyclic case.}
%==================================================================================

Let $X$ be an acyclic simple polyhedron whose singular set consists of circles. 
We note that we only need to consider the case $\partial X\ne \emptyset$ 
since there is no acyclic closed simple polyhedron 
without true vertices \cite[Theorem 1]{ikeda1971acyclic}.

Let $G$ be a graph obtained from $X$. 
Our goal is to transform $G$ into a 1-valent graph whose vertices are 
type (B) and type (D) as shown in Figure \ref{goal graph}. 
It is obvious that this graph corresponds to the polyhedron $D^2$. 

%----------------------------------------------------------------------------------
\begin{assertion}
\label{assrt: no sigma}
There is no embedded $Y_{3}$ in any acyclic simple polyhedron. 
\end{assertion}
\begin{proof}
Assume that there exists $Y_{3}$ in an acyclic simple polyhedron $X$. 
By Lemma \ref{lem:scc splits P}, 
a simple closed curve $\partial Y_{3}$ 
splits $X$ into $Y_{3}$ and an acyclic subpolyhedron 
but does not generate $H_1 (Y_{3};\Z)$. 
It is a contradiction. 
\end{proof}
%----------------------------------------------------------------------------------
%----------------------------------------------------------------------------------
\begin{assertion}
\label{assrt: tree}
The graph $G$ is a tree. 
% Therefore $X$ is uniquely reconstructed from $G$. 
\end{assertion}
\begin{proof}
% Since the graph $G$ is a retract of $P$, 
% the inclusion map $G \hookrightarrow P$ induces an injection $H_{*}(G;\Z ) \to H_{*}(P;\Z )$. 
% Hence $G$ is acyclic since $P$ is acyclic. 
This follows readily from Lemma \ref{lem:scc splits P}. 
\end{proof}
%----------------------------------------------------------------------------------
% Next we deform $G$ in such a way as to correspond to collapsing of $P$. 
% Our goal is to transform $G$ into a 1-valent graph whose vertices are 
% type (B) and type (D) as shown in Figure \ref{goal graph}. 
% It is obvious that this graph corresponds to the polyhedron $D^2$. 

\begin{figure}
\begin{tabular}{ccc}	
\begin{minipage}{0.45\hsize}
	\begin{center}
	 \includegraphics[width=14mm]{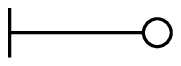}
	 \caption{This graph encodes $D^2$.}
	 \label{goal graph}
	\end{center}
\end{minipage}
\begin{minipage}{0.05\hsize}
$ $
\end{minipage}
\begin{minipage}{0.45\hsize}
	\begin{center}
	 \includegraphics[width=25mm]{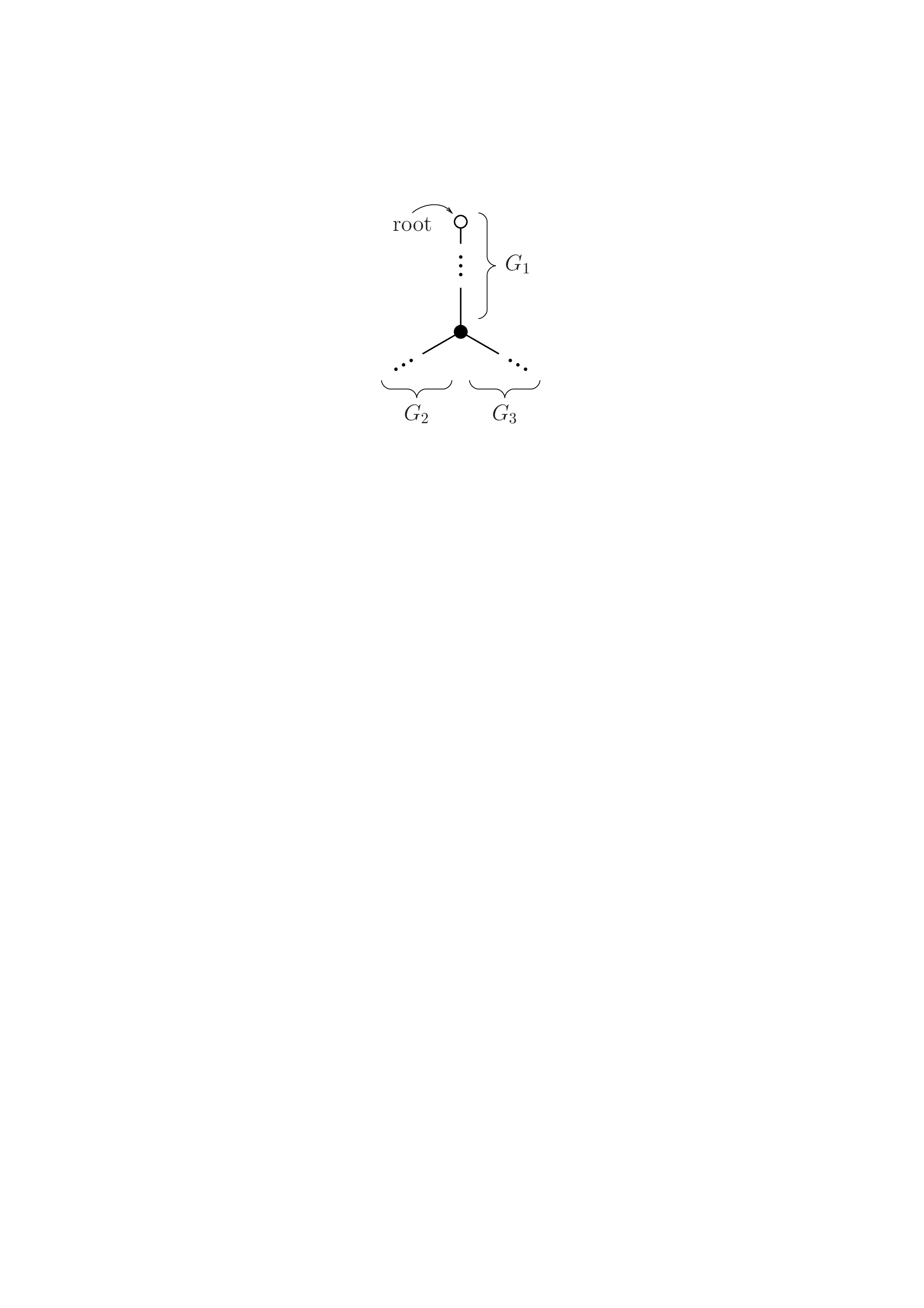}
	 \caption{The central vertex of type (111) is $v_0$. 
The subgraph $G_1$ contains the root of $G$. }
	 \label{around id}
	\end{center}
\end{minipage}
\end{tabular}
\end{figure}

If $X$ is decomposed into some pieces as in Proposition \ref{prop: decomposition of P}, 
there must be at least one piece homeomorphic to $D^2$ 
by applying iteratively Lemma \ref{lem:scc splits P}. 
Choose one of such pieces and let the corresponding vertex of type (D) be the root of $G$. 

We assume that $G$ has at least one vertex of type (111). 
We transform $G$ into a tree having no vertex of type (111). 

Consider the farthest vertex of type (111) from the root. 
Let us denote it by $v_0$. 
If we remove the corresponding piece $Y_{111}$ from $X$, 
it is decomposed into three subpolyhedra. 
Let $X_1$ be one of them such that it contains $D^2$ corresponding to the root 
and $X_2$ and $X_3$ be the remaining two subpolyhedra. 
Let $G_1$, $G_2$ and $G_3$ be the subgraphs of $G$ 
corresponding to $X_1$, $X_2$ and $X_3$ respectively as shown in Figure \ref{around id}. 
By Lemma \ref{lem:scc splits P}, at least one of $X_2$ and $X_3$ is homologically $S^1$. 
Assume that $X_2$ is so. 
Let $\gamma$ be the simple closed curve that cuts $X_2$ off from $X$.
By Lemma \ref{lem:scc splits P}, $\gamma$ generates $H_1(X_2;\Z)$. 
If $X_2$ has no boundary except $\gamma$, 
the simple polyhedron obtained from $X_2$ by capping off the boundary component by a disk 
is closed, acyclic and without true vertices, contrary to \cite[Theorem 1]{ikeda1971acyclic}. 
Hence $X_2$ has some boundary components other than $\gamma$. 
In other words, there exists a vertex of type (B) in $G_2$. 

%----------------------------------------------------------------------------------
\begin{assertion}
\label{assrt: (12)-(P)-(B)}
Let $v_1$ be a vertex of type (P) 
such that it is adjacent to a vertex of type (B). 
If $v_1$ is adjacent to a vertex of type (12), 
the edge between them is marked with two lines. 
\end{assertion}
\begin{proof}
Assume that there is an edge, denoted by $e$, adjoining $v_1$ and a vertex of type (12) 
such that the edge is not marked with two lines. 
Let $v_2$ be the vertex of type (12). 
Along the simple closed curve corresponding to $e$, 
$X$ is decomposed into two subpolyhedra: 
one contains a region $R$ corresponding to $v_1$ and 
another contains $Y_{12}$ corresponding to $v_2$. 
Since $R$ has two boundary components, 
the former subpolyhedron has a 1-cycle. 
By Lemma \ref{lem:scc splits P}, the latter subpolyhedron should be acyclic. 
However the subpolyhedron collapses so that it contains a \Mobius strip in a region. 
This contradicts Lemma \ref{lem:no genus}. 
\end{proof}
%----------------------------------------------------------------------------------

\begin{figure}
	\begin{center}
	\includegraphics[width=9cm]{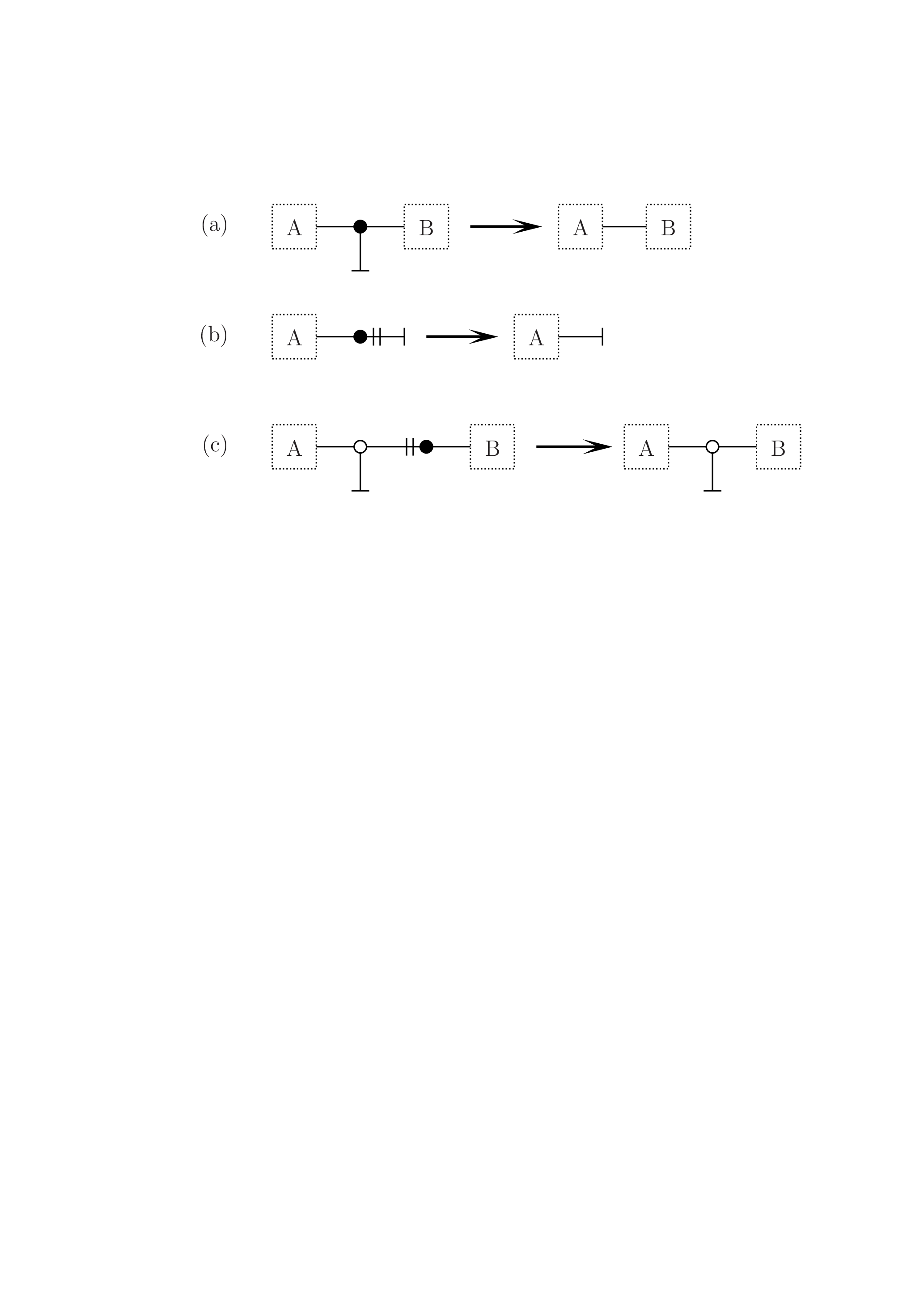}
	\caption{Three moves of graphs in Assertion \ref{assrt: moves of collapsing}.}
	\label{moves of collapsing(graph)}
	\end{center}
% \end{figure}
% \begin{figure}
	\begin{center}
	\includegraphics[width=13cm]{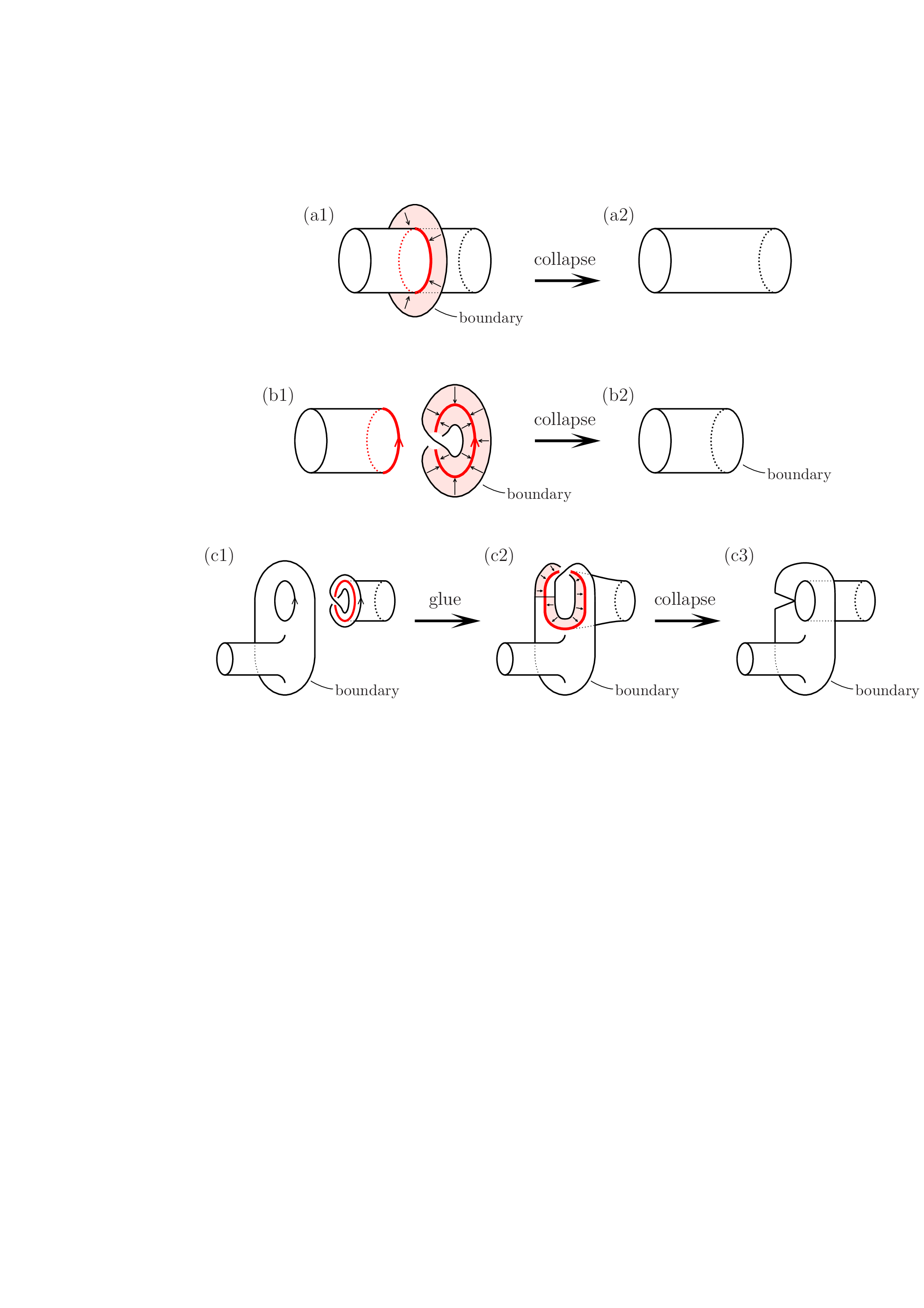}
	\caption{Collapses realizing the moves of graph 
in Figure \ref{moves of collapsing(graph)}.}
	\label{moves of collapsing(poly)}
	\end{center}
\end{figure}
%----------------------------------------------------------------------------------
\begin{assertion}
\label{assrt: moves of collapsing}
The moves of $G$ described in Figure \ref{moves of collapsing(graph)} (a), (b) and (c) are 
realized by collapsings of $X$.
\end{assertion}
\begin{proof}
(a) (resp. (b)) The corresponding part of $X$ is shown in 
Figure \ref{moves of collapsing(poly)} (a1) (resp. (b1)). 
It can collapse along the boundary component in the direction of the arrows described 
in the figure, 
and the resulting polyhedron is shown in 
Figure \ref{moves of collapsing(poly)} (a2) (resp. (b2)). 
% (b) $P$ is described as in Figure \ref{moves of collapsing(poly)} (b1). 
% Similar to (a), $P$ can collapse along the boundary component and is deformed as in the figure. 

% (c) If we convert directly the graph before the move (c) to a polyhedraon, 
% it is described as shown in Figure \ref{moves of collapsing(poly)} (c1). 
(c) Figure \ref{moves of collapsing(poly)} (c1) shows the corresponding part of $X$. 
After we glue the two pieces, a pair of pants and $Y_{12}$ 
as in Figure \ref{moves of collapsing(poly)} (c1), 
it turns out that $X$ can be described as in Figure \ref{moves of collapsing(poly)} (c2). 
Let $X$ collapse along a part of the boundary component as indicated by the arrows. 
Then the resulting polyhedron is shown in Figure \ref{moves of collapsing(poly)} (c3). 
\end{proof}
%----------------------------------------------------------------------------------

%----------------------------------------------------------------------------------
\begin{assertion}
\label{assrt: (111) disappear}
The graph $G$ can change into a subgraph that does not contain $v_0$ and $G_2$ 
only by the moves (a),(b),(c) in Figure \ref{moves of collapsing(graph)}, 
the IH-move and the YV-move. 
\end{assertion}
\begin{proof}
Let $v$ be a vertex of type (B) in $G_2$. 
Then $v$ is adjacent to a unique vertex $v'$ of type (111), (12) or (P). 

If $v'$ is of type (111), that is $v'=v_0$, 
we apply the move (a) and the proof is completed. 

We consider the case where $v'$ is type (12) or (P). 
If $v'$ is of type (12), 
the edge between $v$ and $v'$ must be marked with two lines 
as shown in Figure \ref{possible_cases} (1) 
by the same reason of Assertion \ref{assrt: (12)-(P)-(B)}. 
If $v'$ is of type (P), 
the two vertices adjacent to $v'$ other than $v$ 
can not be of type (B) by the acyclicness of $X$. 
Therefore the possible cases are as shown in Figure \ref{possible_cases}. 

Set $H_0 = G_2$. 
In each case of (1)--(7) in Figure \ref{possible_cases}, 
we apply some moves to $H_k$ as follows and denote the resulting graph by $H_{k+1}$ 
($k=0,1,\ldots$). 
\begin{figure}
	\begin{center}
	\includegraphics[width=9cm]{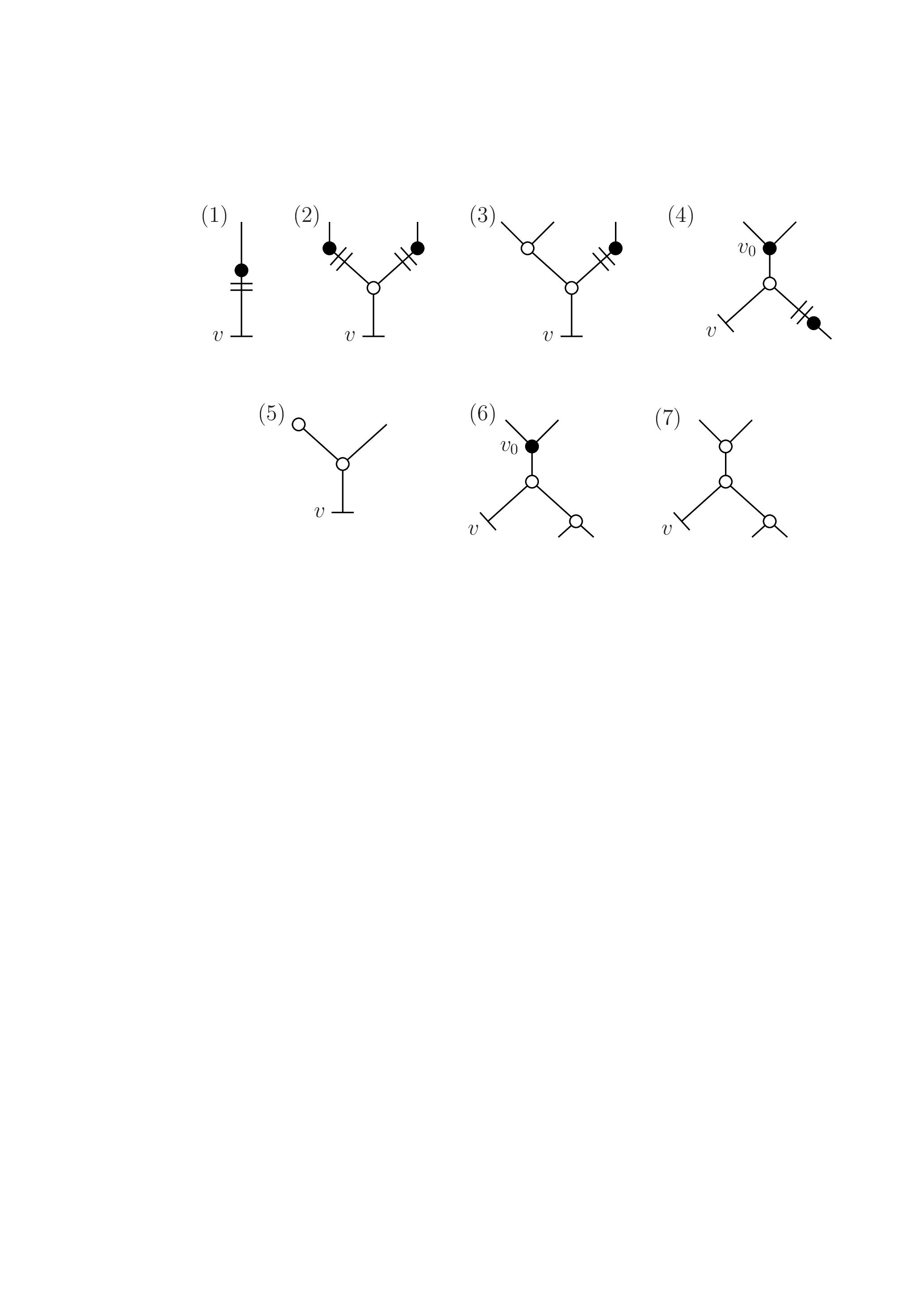}
	\caption{If a vertex of type (B) is adjacent to one of type (12) or (P), 
the graph must have one of these structures locally. }
	\label{possible_cases}
	\end{center}
\end{figure}
\begin{itemize}
\item[(1)] 
In this case we apply the move (b). 
Then the number of vertices in $H_{k+1}$ is less than the one in $H_k$. 
\item[(2)--(4)]
In these cases we apply the move (c). 
Then the number of vertices in $H_{k+1}$ is less than the one in $H_k$. 
\item[(5)]
In this case we apply YV-move. 
Then the number of vertices in $H_{k+1}$ is less than the one in $H_k$. 
\item[(6),(7)]
Let $v'_k$ be the vertex adjacent to $v$ in $H_k$. 
We define a set $V_k$ of vertices of type (P) as follows: 
$u\in V_k$ if and only if there is a path between $v'_k$ and $u$ in $H_k$ 
such that it contains vertices only of type (P). 
Then there exists at least one vertex in $V_k$ 
which is adjacent to a vertex different from $v$ and of type other than (P) or (111) 
since $G$ is a tree. 
Choose such a vertex $u$ in $V_k$ and a path between $v'_k$ and $u$ as above. 
We apply IH-moves along the path, 
and then the resulting graph $H_{k+1}$ is as in one of the cases (1)--(5). 
\end{itemize}
In all cases (1)--(7) we can decrease the number of vertices in $H_k$ but this is finite. 
Hence it comes down to the case where $v$ is adjacent to one of type (111), 
and the proof is completed. 
\end{proof}
%----------------------------------------------------------------------------------

We iterate Assertion \ref{assrt: (111) disappear} 
until all the vertices of type (111) in $G$ disappear 
and denote the resulting graph by $G'$. 
There still exists a vertex of type (B) in $G'$ by \cite[Theorem 1]{ikeda1971acyclic}. 
We denote it by $v$. 
The vertex adjacent to $v$ is of type (D), (12) or (P). 
If it is of type (D), $G'$ is as in Figure \ref{goal graph}, 
which corresponds to $D^2$. 
If the vertex adjacent to $v$ is of type (12) or (P), 
the possible cases are (1)--(3),(5),(7) in Figure \ref{possible_cases}. 
As in the proof of Assertion \ref{assrt: (111) disappear} 
we apply some moves to $G'$ and finally obtain a graph 
such that $v$ is adjacent to one of type (D). 
This completes the proof of Theorem \ref{thm: no vertex}. 

%==================================================================================
\section{Applications}
%==================================================================================
% Let $(X_1, \gl _1)$ and $(X_2 ,\gl _2)$ be two shadowed polyhedra. 
% Choose a closed disk $D_i$ in a region $R_i$ of $X_i$ ($i=1,2$). 
% The {\it sum} of $X_1$ and $X_2$, denoted by $X_1 +X_2$, 
% is defined to be a simple polyhedron obtained from $X_1$ and $X_2$ 
% by gluing $D_1$ and $D_2$. 
% A gleam on each region of $X_1 +X_2$ is defined by the original gleam of $X_1$ or $X_2$ 
% except the glued disk and $R_i \setminus D_i$ ($i=1,2$). 
% We put zero on the glued disk and $\gl _i(R_i)$ on $R_i \setminus D_i$ ($i=1,2$). 
% Thus we have a new shadowed polyhedron $X_1 +X_2$. 
% In \cite{turaev1994quantum} Turaev showed the following theorem. 

% %----------------------------------------------------------------------------------
% \begin{theorem}
% [Turaev \cite{turaev1994quantum}]
% \label{thm:conn sum}
% The boundary connected sum $M_{X_1}\sharp M_{X_2}$ is diffeomorphic to $M_{X_1 +X_2}$. 
% \end{theorem}
% %----------------------------------------------------------------------------------

% We recall that the {\itshape shadow complexity} of a $4$-manifold $X$ 
% is defined as the minimum number of true vertices of a shadow of $X$. 
In this section, we disscuss applications of our results. 
%----------------------------------------------------------------------------------
\begin{proof}
[Proof of Corollary \ref{cor: shadow complexity 0}]
Let $M$ be an acyclic $4$-manifold with shadow complexity zero, 
and let $X$ be a shadow of $M$ without true vertices. 
Then each connected component of $Sing(X)$ is $S^1$ or a closed interval. 
If $Sing(X)$ consists of only circles, 
then $X$ collapses onto $D^2$ by Theorem \ref{thm: no vertex}. 
By Theorem \ref{thm: s-collapse}, a disk $D^2$ is a shadow of $M_X$, 
and then $M_X$ is diffeomorphic to $D^4$. 

Assume that $Sing(X)$ has a closed interval component. 
We decompose $X$ into subpolyhedra $X_1,\ldots ,X_n$ 
along all closed interval components of $Sing(X)$. 
From Turaev's reconstruction \cite{turaev1994quantum}, 
the boundary connected sum $M_{X_1} \natural \cdots \natural M_{X_n}$ and $M_{X}$ are diffeomorphic. 
For $1\leq i \leq n$, the 4-manifold $M_{X_i}$ is diffeomorphic to $D^4$ as mentioned above
since $Sing(X_i)$ consists of only circles. 
It follows that $M_X \cong D^4$. 
% and let $\beta$ denote it. 
% Then $X$ is split into three connected subpolyhedra of $X$ by removing $\Nbd (\beta ;P)$. 
% We denote these subpolyhedra by $X_1$, $X_2$ and $X_3$ 
% and equip them with gleams as restrictions of the gleam on $X$. 
% From Turaev's reconstruction \cite{turaev1994quantum}, 
% $M_{X}$ is diffeomorphic to the boundary connected sum $M_{X_1} \sharp M_{X_2} \sharp M_{X_3}$, 
% that is, $X$ is a shadow of $M_{X_1} \sharp M_{X_2} \sharp M_{X_3}$. 
% On the other hand, a sum $X_1 +X_2 +X_3$ 
% is also a shadow of $M_{P_1} \sharp M_{P_2} \sharp M_{P_3}$ by Theorem \ref{thm:conn sum}. 
% Thus $M_P \cong M_{P_1 +P_2 +P_3}$. 
% If we decompose $P$ into some subpolyhedra $P_1,\ldots ,P_n$ 
% along all closed interval components of $Sing(P)$, 
% we have $M_{P} \cong M_{P_1 + \cdots +P_n}$. 
% Since $P_1 + \cdots +P_n$ is an acyclic simple polyhedron 
% without true vertices by the above construction, 
% it collapses onto $D^2$ by Theorem \ref{thm: no vertex}. 
% By Theorem \ref{thm: s-collapse}, $D^2$ is a shadow of $M_P$, 
% and then $M_P$ is diffeomorphic to $D^4$. 
\end{proof}
%----------------------------------------------------------------------------------

We have another application to the study of 3-manifolds.
Let $N$ be a closed connected $3$-manifold. 
A shadow of $N$ is defined as a shadow of a $4$-manifold whose boundary is $N$. 
Costantino and Thurston indicated in \cite{costantino20083} 
that the Stein factorization of a stable map on $N$ to $\R ^2$ can be seen as a shadow of $N$ 
with a certain modification if necessary. 
The notion of complexity of stable maps was introduced 
by Ishikawa and Koda in \cite{ishikawa2014stable}. 
Especially, the complexity of a stable map is zero 
if and only if the map has no singular fiber of type $\text{II}^2$ and type $\text{II}^3$. 
See \cite{saeki2004,ishikawa2014stable} for the precise definitions. 
These observations and our results immediately yield the following: 
%----------------------------------------------------------------------------------
\begin{corollary}
\label{rmk: stein fac}
% If a $3$-manifold $N$ admits a stable map with only fold singularities and 
% acyclic Stein factorization, 
% then $N$ is homeomorphic to $S^3$. 
Let $N$ be a closed connected $3$-manifold. 
Then $N$ admits a stable map with complexity zero and 
acyclic Stein factorization 
if and only if $N$ is homeomorphic to $S^3$. 
\end{corollary}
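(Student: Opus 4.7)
The plan is to translate the statement about stable maps into a statement about shadows and then invoke Corollary \ref{cor: shadow complexity 0}. First, I would recall from Costantino and Thurston \cite{costantino20083} that the Stein factorization $W_f$ of a stable map $f\colon N\to\R^2$ carries the structure of a simple polyhedron and, after a small modification near the singular fibers if necessary, realizes a shadow of a compact oriented $4$-manifold $M$ with $\partial M=N$. Combining this with the dictionary of Ishikawa and Koda \cite{ishikawa2014stable}, the hypothesis that $f$ has complexity zero (no singular fibers of type $\mathrm{II}^2$ or $\mathrm{II}^3$) corresponds exactly to the absence of true vertices in $W_f$.

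For the ``only if'' direction, suppose such a stable map $f$ exists. Then $N$ bounds a $4$-manifold $M$ whose shadow is the acyclic simple polyhedron $W_f$ with no true vertices. Since $M$ collapses onto $W_f$, the manifold $M$ itself is acyclic, so $M$ is an acyclic $4$-manifold of shadow complexity zero. Corollary \ref{cor: shadow complexity 0} then forces $M\cong D^4$, so $N=\partial M\cong S^3$.

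For the ``if'' direction, when $N\cong S^3$ I would exhibit an explicit stable map realizing the hypotheses, for example the restriction to $S^3\subset\R^2\times\R^2$ of the projection onto the first $\R^2$-factor. Its Stein factorization is a disk $D^2$, which is acyclic and contains no true vertices, and the map itself has no singular fibers of types $\mathrm{II}^2$ or $\mathrm{II}^3$, hence complexity zero.

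The main obstacle I expect is the initial dictionary step: one must check carefully that the ``small modification'' used to upgrade the Stein factorization to an honest shadow preserves both acyclicity and the absence of true vertices, and that ``complexity zero'' for the stable map translates precisely to ``no true vertices'' in the resulting shadow. Once this bookkeeping is settled, the corollary follows directly from Corollary \ref{cor: shadow complexity 0} together with the explicit example on $S^3$.
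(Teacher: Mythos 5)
Your proposal is correct and follows essentially the same route as the paper, which derives the corollary immediately from the Costantino--Thurston identification of Stein factorizations with shadows, the Ishikawa--Koda dictionary (complexity zero $\Leftrightarrow$ no fibers of type $\mathrm{II}^2$ or $\mathrm{II}^3$ $\Leftrightarrow$ no true vertices), and Corollary \ref{cor: shadow complexity 0}. Your explicit stable map $S^3\subset\R^2\times\R^2\to\R^2$ with Stein factorization $D^2$ simply makes concrete the ``if'' direction that the paper leaves implicit.
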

%--------------------------------------------------------------------


\begin{thebibliography}{99}
\bibitem{Carrega2014ribbon}
A. Carrega and B. Martelli, 
{\itshape Shadows, ribbon surfaces, and quantum invariants}, 
arXiv preprint arXiv:1404.5983, to appear in Quantum Topology.
%--------------------------------------------------------------------
\bibitem{costantino2006complexity}
F. Costantino, 
{\itshape Complexity of 4-manifolds}, 
Experiment. Math. {\bfseries 15} (2006), 237–-249. 
%--------------------------------------------------------------------
\bibitem{costantino2006stein}
F. Costantino, 
{\itshape Stein domains and branched shadows of 4-manifolds}, 
Geom. Dedicata {\bfseries 121} (2006), 89–-111. 
%--------------------------------------------------------------------
\bibitem{costantino2008branched} 
F. Costantino, 
{\itshape Branched shadows and complex structures on 4-manifolds}, 
J. Knot Theory Ramifications {\bfseries 17} (2008), 1429–1454. 
%--------------------------------------------------------------------
\bibitem{costantino20083}
F. Costantino and D. Thurston, 
{\itshape 3-manifolds efficiently bound 4-manifolds}, 
J. Topol. {\bfseries 1} (2008), 703–745. 
%--------------------------------------------------------------------
\bibitem{hatcher1999pants}
A. Hatcher, 
{\itshape Pants decompositions of surfaces}, 
arXiv:9906084. 
%--------------------------------------------------------------------
\bibitem{HirschMazur}
M. Hirsch and B. Mazur,
{\itshape Smoothings of piecewise linear manifolds},
Ann. Math. Studies 80, Princeton University Press, 
Princeton NJ 1974. 
%--------------------------------------------------------------------
\bibitem{ikeda1971acyclic}
H. Ikeda, 
{\itshape Acyclic fake surfaces}, 
Topology {\bfseries 10} (1971), 9–36. 
%--------------------------------------------------------------------
\bibitem{ishikawa2014stable}
M. Ishikawa and Y. Koda, 
{\itshape Stable maps and branched shadows of 3-manifolds}, 
arXiv preprint arXiv:1403.0596, to appear in Math. Ann.
%--------------------------------------------------------------------
\bibitem{martelli2010four}
B. Martelli, 
{\itshape Four-manifolds with shadow-complexity zero},
Int. Math. Res. Not. IMRN {\bfseries 2011}, 1268–1351. 
%--------------------------------------------------------------------
\bibitem{RourkeSanderson}
C. Rourke and B. Sanderson,
{\itshape Introduction to piecewise-linear topology}, 
Ergebnisse der Mathematik, 
Band 69, Springer-Verlag, New York-Heidelberg, 1972.
%--------------------------------------------------------------------
\bibitem{saeki2004}
O. Saeki, 
{\itshape Topology of singular fibers of differentiable maps}, 
Lecture Notes in Mathematics 1854, Springer-Verlag, Berlin, 2004.
%--------------------------------------------------------------------
\bibitem{turaev1994quantum}
V. G. Turaev, 
{\itshape Quantum invariants of knots and 3-manifolds}, 
de Gruyter Studies in Math., 
18, Walter de Gruyter \& Co., Berlin, 1994.
%--------------------------------------------------------------------
\end{thebibliography}
\end{document}